\newcommand{\N}{\mathbb{N}}                     
\newcommand{\Z}{\mathbb{Z}}                     
\newcommand{\R}{\mathbb{R}}                     
\newcommand{\D}{\mathbb{D}}                     
\newcommand{\varphitilde}{\tilde{\varphi}}		
\newcommand{\psitilde}{\tilde{\psi}}			
\newcommand{\supp}{\mathrm{supp\,}}             
\newcommand{\spec}{\mathcal{S}}             
\newcommand{\specm}{\mathcal{S}_{\mathrm{mean}}}            
\newcommand{\specmbar}{\overline{\mathcal{S}}_{\mathrm{mean}}}            
\newcommand{\Cal}{\mathrm{CAL\,}}   		
\newcommand{\wind}{\mathrm{wind}}  	
\newcommand{\Ac}{\mathcal{A}}
\newcommand{\area}{\mathrm{Area}}
\newcommand{\Id}{\mathrm{Id}}
\theoremstyle{theorem}				
\theoremstyle{definition}					
\theoremstyle{remark}
\theoremstyle{theorem}
\newtheorem{thm}{\sc Theorem}
\newtheorem{lem}[thm]{\sc Lemma}			
\newtheorem{prop}[thm]{\sc Proposition}		
\newtheorem{cor}{\sc Corollary}			
\theoremstyle{definition}
\newtheorem{defn}[thm]{\sc Definition}		
\newtheorem{ex}{\sc Example}			
\theoremstyle{remark}
\newtheorem{rem}{Remark}		
\title[Spectral invariants for non-compactly supported Hamiltonians on the disc]{Spectral invariants for non-compactly supported Hamiltonians on the disc, and an application to the mean action spectrum}
\author{Barney Bramham, Abror Pirnapasov}
\address{Ruhr-Universit\"at Bochum}
\email{\text{barney.bramham@rub.de }}
\address{University of Maryland, College Park}
\email{\text{apirnapa@umd.edu}}
\date{\today}
\begin{document}

\maketitle

\begin{abstract}
For a symplectic isotopy on the two-dimensional disc we show that the classical spectral invariants of Viterbo \cite{viterbo1992symplectic} can be extended in a meaningful way to {\it non-compactly} supported
Hamiltonians.   We establish some basic properties of these extended invariants and as an application we show that Hutchings' inequality in \cite{hutchings2016mean}  between the Calabi invariant and the mean action spectrum 
holds without any assumptions on the isotopy; in \cite{hutchings2016mean} it is assumed that the Calabi invariant is less than the rotation number (or action) on the boundary.  

\end{abstract}

\section{Introduction}

We begin with the application.   In \cite{hutchings2016mean} Hutchings used embedded contact homology to prove a striking inequality relating the mean action spectrum with the Calabi invariant of 
a Hamiltonian isotopy of the disc.     Recall that if $\varphitilde=\{\varphi_t\}_{t\in[0,1]}$ is a smooth symplectic isotopy on a closed two-dimensional symplectic disc $(\D,\omega)$, 
starting at the identity $\varphi_0=\mathrm{id}_{\D}$, then the Calabi invariant $\Cal(\varphitilde)\in\R$ can be defined as an average action of $\varphitilde$, or equivalently as the integral 
$\int_0^1\int_{\D}H\,\omega\,dt$ of the unique generating Hamiltonian $H:\R/\Z\times\D\to\R$ that has been 
normalized to vanish pointwise on the boundary of the disc.   Unless stated otherwise, all objects are smooth.

\begin{thm}[Hutchings' inequality \cite{hutchings2016mean}]\label{T:Hutchings Intro}
Consider a symplectic isotopy $\varphitilde=\{\varphi_t\}_{t\in[0,1]}$ on the closed symplectic disc $(\D,\omega_0)$, normalized to have area $1$ \footnote{Some such normalization is necessary, as 
$\Cal(\varphitilde)$ has different dimensions to the action $\sigma_{\varphitilde,\lambda}$.  Alternatively, in the statement and throughout the paper, one could replace the Calabi invariant by the spatial average $\frac{1}{\int_{\D}\omega_0}\Cal(\varphitilde)$ and replace $a$ by the average $\frac{1}{\int_{\partial\D}\lambda_0}\int_{\partial\D}\sigma_{\varphitilde,\lambda_0}\lambda_0$. }.    Let $a:=\overline{\sigma}_{\varphitilde}|_{\partial\D}\in\R$ denote the mean action on the boundary.   If  
\begin{equation}\label{E:ass  Intro}
								\Cal(\varphitilde)< a
\end{equation}
then 
\begin{equation}\label{E:Hutchings  Intro}
								\inf\specm(\varphitilde)\ \leq\ \Cal(\varphitilde)
\end{equation}
where $\specm(\varphitilde)$ is the set of all mean actions at interior periodic points of $\varphi$, also called the mean spectral values.   See \ref{S:mean action} for precise definitions.  
\end{thm}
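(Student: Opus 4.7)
The plan is to deduce Hutchings' inequality from two key properties of the extended spectral invariants constructed later in the paper. Specifically, I would assume that the paper supplies a lower spectral invariant $c^-(\psitilde)\in\R$, defined for symplectic isotopies $\psitilde$ of $(\D,\omega_0)$ starting at the identity and normalized so that the generating Hamiltonian vanishes on $\partial\D$, enjoying the following two properties: (i) \emph{spectrality}, i.e.\ $c^-(\psitilde)=\sigma_{\psitilde}(x)$ for some fixed point $x\in\D$ of $\psi_1$, and (ii) a \emph{Calabi upper bound} $c^-(\psitilde)\leq \Cal(\psitilde)$. These are the natural analogues on the closed disc of the two main properties Viterbo established on $\R^{2n}$ for compactly supported Hamiltonians, and they are precisely what the body of the paper is designed to provide.

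Granted (i) and (ii), the argument is essentially immediate. Applying them to $\varphitilde$ yields a fixed point $x\in\D$ of $\varphi_1$ with
\[
\sigma_{\varphitilde}(x)\;=\;c^-(\varphitilde)\;\leq\;\Cal(\varphitilde)\;<\;a.
\]
The strict inequality $\sigma_{\varphitilde}(x)<a$, combined with the fact that actions on $\partial\D$ are controlled from below by the boundary mean value $\overline{\sigma}_{\varphitilde}|_{\partial\D}=a$, forces $x$ to lie in the interior of $\D$. Hence $x$ is an interior periodic point of $\varphitilde$ whose mean action equals its action, so
\[
\overline{\sigma}_{\varphitilde}(x)\;=\;\sigma_{\varphitilde}(x)\;\leq\;\Cal(\varphitilde),
\]
which gives $\inf\specm(\varphitilde)\leq \Cal(\varphitilde)$, i.e.\ \eqref{E:Hutchings  Intro}. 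Repeating the same argument with each iterate $\varphitilde^k$ in place of $\varphitilde$, and using the homogeneities $\Cal(\varphitilde^k)=k\Cal(\varphitilde)$ and boundary mean action $ka$, in fact produces infinitely many interior periodic points with mean action at most $\Cal(\varphitilde)$.

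The real content is thus concentrated in constructing $c^-$ and verifying (i) and (ii) in the non-compactly-supported setting on $\D$; both classical routes to Viterbo invariants (generating functions and Hamiltonian Floer homology on $\R^{2n}$) use compact support of $H$ in an essential way. I expect the Calabi upper bound (ii) to be the more delicate of the two, because the Calabi invariant depends sensitively on the specific normalization $H|_{\partial\D}\equiv 0$ and $c^-$ must be constructed compatibly with that normalization for the inequality $c^-\leq\Cal$ to hold, and even to be meaningful. A secondary, smaller subtlety lies in the ``locate in the interior'' step, which requires that the actions of all boundary fixed points of $\varphi_1$ be at least $a$; this is automatic when the boundary rotation is homogeneous in time, but must be checked against the precise definition of $\overline{\sigma}_{\varphitilde}|_{\partial\D}$ given later in the paper.
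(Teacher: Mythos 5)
The theorem you are asked to prove is not proved in this paper at all: Theorem \ref{T:Hutchings Intro} is Hutchings' result, imported from \cite{hutchings2016mean} as a black box, and the authors state explicitly that its proof rests on the relation between the ECH spectrum and contact volume \cite{cristofaro2015asymptotics}, with no alternative proof (Floer-theoretic or dynamical) known. The extended spectral invariants constructed in this paper serve a different purpose: they yield Corollary \ref{C:a in Smeanbar} (the boundary mean action $a$ lies in $\specmbar(\varphitilde)$), which is then combined with Hutchings' theorem to remove hypothesis \eqref{E:ass  Intro} in Theorem \ref{T:Hutchings strong 1}. So your proposal does not follow the paper's route; more importantly, it cannot be completed as sketched.

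The decisive gap is your assumed property (ii), the ``Calabi upper bound'' $c_-(\psitilde)\leq\Cal(\psitilde)$. This is neither established in the paper nor true, already in the classical compactly supported setting: take $H\leq 0$ a time-independent bump supported in a disc $U\subset\dot{\D}$ of area $\epsilon$, with $\min H=-C$. Properties (2) and (5) of Theorem \ref{T:Vit 1} give $c_-(H)\geq c_+(H)-\gamma(H)\geq-\epsilon$, while $\Cal(H)=\int_0^1\int_{\D}H\,\omega_0\,dt$ is arbitrarily negative for $C$ large; hence no inequality of the form $c_-\leq\Cal$ can hold. The genuine link between periodic-orbit actions and the Calabi invariant is asymptotic in nature (it concerns mean actions over all iterates), and producing it is exactly the hard content that Hutchings extracts from the ECH volume property; a single-period spectral invariant argument of the kind you propose would prove the much stronger (and false) statement that some fixed point has action at most $\Cal(\varphitilde)$. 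Your assumption (i) is also not available in this setting: Example \ref{Ex:example not in spec} and Theorem \ref{T:spec main-1 Intro} show that the extended invariants $c_\pm(\varphitilde)$ need not be spectral values at all unless the boundary rotation number is restricted as in Theorem \ref{T:spec main-2 Intro}. In short, the proposal assumes precisely the inequality that constitutes the theorem's depth, and that assumption is false.
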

Note that $a$ is also the rotation number of $\varphitilde$ restricted to $\partial\D$, which is how the statement is formulated in \cite{hutchings2016mean}.   This agreement is due to the normalization of the action function chosen in \eqref{E:bc}, see also Remark \ref{R:bc}.   In \cite{hutchings2016mean} it is assumed that the time-$1$ map $\varphi$ is a rigid rotation on a neighborhood of the boundary of the disc.   This boundary 
condition was later removed by the second author, see \cite{pirnapasov2021hutchings}.   

Our main application in this article is that assumption \eqref{E:ass  Intro} can be removed: 

\begin{thm}\label{T:Hutchings strong 1}
For any symplectic isotopy $\varphitilde=\{\varphi_t\}_{t\in[0,1]}$ on the disc $(\D,\omega_0)$, normalized to have unit area, there holds 
\begin{equation*}
								\inf\specm(\varphitilde)\ \leq\ \Cal(\varphitilde).  
\end{equation*}
\end{thm}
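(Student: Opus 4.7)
\emph{Plan.} My plan is to apply the extended spectral invariants for non-compactly supported Hamiltonians on the disc, developed in the main body of this paper, to $\varphitilde$ and, if necessary, to its iterates. I will write $c(\psitilde)\in\R$ for such a spectral invariant of an isotopy $\psitilde$ and rely on two key properties: (i) a Calabi-type upper bound $c(\psitilde)\leq \Cal(\psitilde)$, and (ii) a realization property saying that $c(\psitilde)$ equals the classical action at some interior fixed point of the time-one map of $\psitilde$. If $\varphi_1$ has no interior fixed points, I would instead apply the same argument to the $n$-fold concatenation of $\varphitilde$ for an appropriate $n\geq 2$.

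Granting (i) and (ii) for iterates, let $p_n$ be an interior fixed point of $\varphi_1^n$ whose action realizes the extended spectral invariant of the $n$-fold iterate. Then $p_n$ defines an interior period-$n$ orbit of $\varphitilde$. Additivity of the Calabi invariant under iteration gives that the Calabi of the iterate is $n\,\Cal(\varphitilde)$, so by (i) the spectral invariant of the iterate is at most $n\,\Cal(\varphitilde)$. Dividing by $n$ converts the action at a fixed point of $\varphi_1^n$ into the mean action of the associated period-$n$ orbit of $\varphitilde$, yielding $\overline{\sigma}_{\varphitilde}(p_n)\leq \Cal(\varphitilde)$. Since $\overline{\sigma}_{\varphitilde}(p_n)\in \specm(\varphitilde)$, the desired inequality $\inf\specm(\varphitilde)\leq \Cal(\varphitilde)$ follows at once.

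The hardest step will be (ii): ensuring that the extended invariant is realized by an \emph{interior} fixed point, and not by a boundary fixed point or a variational sequence drifting out to $\partial\D$. For compactly supported Hamiltonians this is automatic in the classical Viterbo theory, but when the Hamiltonian is non-compactly supported the action functional acquires boundary behavior that must be controlled, and there are extra candidate critical points on $\partial\D$ that must be excluded. Arranging the extended invariants so that this interior realization is always available is the technical heart of the paper. In contrast to Theorem~\ref{T:Hutchings Intro}, whose proof uses the hypothesis $\Cal(\varphitilde)<a$ to suppress boundary phenomena via embedded contact homology, the extended Viterbo invariants localize the critical set to the interior intrinsically, with no relation between $\Cal$ and $a$ required.
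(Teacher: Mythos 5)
Your argument rests on two properties that the paper's machinery does not provide and that are in fact false. For (i), there is no Calabi-type bound $c_{\pm}(\psitilde)\leq\Cal(\psitilde)$, either for the classical Viterbo invariants or for the extended ones: take a compactly supported $H\leq 0$ which is a deep well of depth $-M$ on a set of area $\epsilon$ with $\epsilon M$ of order $1$; then $\Cal$ is close to $-1$, while property (5) of Theorem \ref{T:Vit 1} (support bound on $\gamma$) forces $c_{\pm}\in[-\epsilon,\epsilon]$, so neither invariant is $\leq\Cal$. No such bound appears in Theorem \ref{T:Vit 1} or Proposition \ref{P:1}, and your dividing-by-$n$ step cannot be rescued without it. For (ii), interior realization also fails: Example \ref{Ex:example not in spec} shows the extended invariants need not be spectral values at all (Theorem \ref{T:spec main-1 Intro} only places them in $\spec(\varphitilde)\cup\{0\}\cup\{\pm A\}$), and the conditional realization result, Theorem \ref{T:spec main-2 Intro}, requires the boundary rotation number to lie in $[-1,1]$ --- a hypothesis that is destroyed precisely when you pass to iterates, since the rotation number scales by $n$; moreover even a realized value $0$ may come from a boundary fixed point. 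So your claim that the extended invariants ``localize the critical set to the interior intrinsically'' contradicts the paper's own statements.

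More fundamentally, if (i) and (ii) did hold, your scheme would yield a proof of Theorem \ref{T:Hutchings strong 1} (hence of Hutchings' inequality) with no ECH input, whereas the paper explicitly states that no such alternative proof is known and uses Theorem \ref{T:Hutchings Intro} as a black box. The actual proof is a case split: writing $a$ for the mean action on $\partial\D$, if $\Cal(\varphitilde)<a$ one simply invokes Hutchings' theorem; if $a\leq\Cal(\varphitilde)$, one proves instead that $a\in\specmbar(\varphitilde)$ (Corollary \ref{C:a in Smeanbar}). That corollary is where the extended invariants enter: applying Theorem \ref{T:spec main-2 Intro} and Proposition \ref{P:rot} to each iterate $\varphitilde^{j}$, after composing with a rigid rotation to bring the boundary data back into the admissible range, produces interior $j$-periodic points whose actions lie within $\pi$ of $\pi k_j$ with $k_j\pi\leq aj<(k_j+1)\pi$; dividing by $j$ gives interior mean actions converging to $a$, whence $\inf\specm(\varphitilde)\leq a\leq\Cal(\varphitilde)$. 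In short, the extended invariants are used to approximate the boundary mean action $a$ from inside, not to bound $\Cal$ from below, and the case $\Cal<a$ still requires Hutchings' ECH result.
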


This result can also be applied to the inverse isotopy $\varphitilde^{-1}=\{\varphi^{-1}_t\}_{t\in[0,1]}$, something which could not be done from the original statement, because condition \eqref{E:ass Intro} 
can never hold simultaneously for $\varphitilde$ and $\varphitilde^{-1}$.   Therefore:  

\begin{cor}\label{T:Hutchings strong 2}
For any symplectic isotopy $\varphitilde=\{\varphi_t\}_{t\in[0,1]}$ on the disc $(\D,\omega_0)$, normalized to have unit area, the associated Calabi invariant and the mean spectral values 
satisfy 
\begin{equation}\label{E:Hutchings strong 2}
								\inf\specm(\varphitilde)\ \leq\ \Cal(\varphitilde)\ \leq\ \sup\specm(\varphitilde).  
\end{equation}
\end{cor}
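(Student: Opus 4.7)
The plan is to derive the corollary directly from Theorem \ref{T:Hutchings strong 1} by applying it both to $\varphitilde$ and to its time-reversed inverse isotopy $\varphitilde^{-1}=\{\varphi_t^{-1}\}_{t\in[0,1]}$, and then exploiting two elementary symmetries: the Calabi invariant is a homomorphism, hence reverses sign under inversion, while the mean action at a periodic point also reverses sign when the isotopy is inverted.

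First, applying Theorem \ref{T:Hutchings strong 1} to $\varphitilde$ gives the left-hand inequality $\inf\specm(\varphitilde)\leq\Cal(\varphitilde)$ immediately, without any further argument. For the right-hand inequality, I would apply the same theorem to $\varphitilde^{-1}$, obtaining
\begin{equation*}
\inf\specm(\varphitilde^{-1})\ \leq\ \Cal(\varphitilde^{-1}).
\end{equation*}
The two key identities to invoke here are
\begin{equation*}
\Cal(\varphitilde^{-1})=-\Cal(\varphitilde),\qquad \specm(\varphitilde^{-1})=-\specm(\varphitilde).
\end{equation*}
The first is a standard property of Calabi: if $H_t$ is the normalized Hamiltonian generating $\varphitilde$, then $-H_t\circ\varphi_t$ generates $\varphitilde^{-1}$ and is also normalized to vanish on $\partial\D$, and the integral over $\D$ picks up a sign. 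The second identity is the corresponding fact at the level of periodic orbits: every interior periodic point $z$ of $\varphi$ is also a periodic point of $\varphi^{-1}$, and with the normalization of the action function chosen in the paper, the action of $z$ viewed as a periodic orbit of $\varphitilde^{-1}$ is the negative of its action viewed as a periodic orbit of $\varphitilde$; passing to the mean simply averages over iterates and preserves this sign flip. Consequently $\inf\specm(\varphitilde^{-1})=-\sup\specm(\varphitilde)$.

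Substituting these two identities into the inequality applied to $\varphitilde^{-1}$ yields $-\sup\specm(\varphitilde)\leq -\Cal(\varphitilde)$, which is the desired right-hand bound $\Cal(\varphitilde)\leq\sup\specm(\varphitilde)$. Combining this with the first application of the theorem finishes the proof.

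The proof is therefore almost entirely formal once Theorem \ref{T:Hutchings strong 1} is in hand; the only step that is not purely symbolic is checking the two sign reversal identities, and in particular making sure the chosen boundary normalization of the action function \eqref{E:bc} is compatible with time-reversal so that the mean spectrum genuinely changes sign. This compatibility is the single place where one has to be careful, but it is a routine verification rather than a substantive obstacle.
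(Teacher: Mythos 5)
Your proposal is correct and follows essentially the same route as the paper: the corollary is obtained by applying Theorem \ref{T:Hutchings strong 1} to the inverse isotopy $\varphitilde^{-1}$ and using the sign reversal of the Calabi invariant and of the (mean) action spectrum under inversion. The sign-reversal identities you verify (including compatibility with the boundary normalization \eqref{E:bc}) are exactly the routine facts the paper leaves implicit.
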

In \cite{Cal} Le Calvez proved inequality \eqref{E:Hutchings strong 2} for $C^{1}$ pseudo-rotations that are a rotation on the boundary using his method of transverse foliations.   In this case the inequalities are equalities. 

Let us point out that we do not give a new proof of Theorem \ref{T:Hutchings Intro}: 
The proof in \cite{hutchings2016mean} ultimately relies on deep connections 
between the ECH spectrum and contact volume in three dimensions, see \cite{cristofaro2015asymptotics}, and as far as we are aware no alternative proof is known; either using tools from dynamical systems or more classical Floer-theoretic methods.  An approach using periodic Floer homology (PFH) is also not known, 
despite connections between the PFH spectrum and the Calabi invariant \cite{cristofaro2021periodic}.   One difficulty is that the twisted PFH action is different to the Hamiltonian action.  However there are some related results:  In \cite{pirnapasov2022generic} the second author and Prasad used PFH to prove a weaker version of \eqref{E:Hutchings strong 2} for $C^{\infty}$-generic Hamiltonian 
diffeomorphisms of compact surfaces, possibly with boundary; in \cite{weiler2021mean, nelson2023torus} Weiler and Nelson-Weiler use ECH to obtain results similar to Theorem \ref{T:Hutchings Intro} for symplectic diffeomorphisms of the annulus and for higher-genus surfaces with one boundary component.

\subsection{Overview of extending the spectral invariants}

Recall that the classical spectral invariants \cite{viterbo1992symplectic} are a pair of continuous sections of the action spectrum $\varphitilde\mapsto c_-(\varphitilde)$ and $\varphitilde\mapsto c_+(\varphitilde)$, 
that are defined on any smooth Hamiltonian isotopy $\varphitilde=\{\varphi_t\}_{t\in[0,1]}$ on the standard symplectic vector space $(\R^{2n},\omega_0)$, provided the isotopy can be generated by a 
compactly supported, time-dependent Hamiltonian $H\in C^\infty(\R/\Z\times\R^{2n},\R)$.    This is recalled more precisely in \ref{S:classical spec}. 
 The {\it action spectrum} of $\varphitilde$ refers to the set of actions of the $1$-periodic orbits: 
\begin{equation}\label{E:action spec}
							\spec(\varphitilde):=\big\{ \Ac_{H}(p)\in\R\ \big|\ p\in\mathrm{Fix}(\varphi_1) \big\}
\end{equation}
where the action is defined by \eqref{D:action functional}.   The spectral invariants $c_-(\varphitilde)$ and $c_+(\varphitilde)$ enjoy many nice properties recalled in Theorem \ref{T:Vit 1}.  
In particular $c_-(\varphitilde)$ and $c_+(\varphitilde)$ depend only on the time-one map $\varphi_1$ and they depend continuously with respect to the Hofer metric.    

To extend $c_+$ and $c_-$ to a symplectic isotopy $\varphitilde$ on the disc, not necessarily compactly supported in the interior, we use a limiting procedure along the 
following lines: $\varphitilde$ is uniquely generated by a time-dependent Hamiltonian function on $\D$ that vanishes pointwise on the boundary $\partial\D$ for all times, even though in general it 
is not supported in the interior $\dot{\D}=\D\backslash\partial\D$.    Thus we can extend the Hamiltonian function continuously to $H:\R/\Z\times\R^2\to\R$ by setting $H\equiv0$ on the complement 
$\R^2\backslash\D$.    Thus $H$ is continuous everywhere, and smooth away from the circle $\partial\D\subset\R^2$.   
Then we take a sequence of smooth approximations $H_n:\R/\Z\times\R^2\to\R$ with compact support, which converge to $H$ (only) in a $C^0$-sense.    The spectral invariants for each $H_n$ 
are well defined and one can define 
\[
							c_{\pm}(\varphitilde):=\lim_{n\to\infty}c_{\pm}(H_n).   
\]  
 
Indeed, it follows easily from the continuity properties with respect to Hofer's norm, that the above limits exist and are independent of the choice of approximating sequence $H_n$.   The limits also trivially inherit 
a number of fundamental properties listed in Proposition \ref{P:1}, including the non-triviality property that at least one of $c_+(\varphitilde)$ and $c_-(\varphitilde)$ is non-zero if $\varphi\neq\Id$.   
However, two highly desirable properties are not obvious, because the convergence of the Hamiltonians is not in $C^1$:  
\begin{enumerate}
 \item First, that $c_{+}(\varphitilde)$ and $c_{-}(\varphitilde)$ depend only on the time-one map - or rather on the 
 homotopy class of $\varphitilde$ with fixed ends.   
 \item Secondly, that the values $c_{+}(\varphitilde)$, $c_{-}(\varphitilde)$
are indeed spectral values, i.e.\ that they represent the actions of fixed points of the time-one map.  
\end{enumerate} 
Note that for the classical spectral invariants, i.e.\ for compactly supported Hamiltonians on $\R^{2n}$, one usually shows the first property after establishing the second together with the 
continuity properties.    But this strategy doesn't work here as the second of the desired properties above will no longer be true in general.    What remains true, is that $c_{+}(\varphitilde)$, $c_{-}(\varphitilde)$ lie 
in a discrete set, which is enough.   We will show: 

\begin{thm}\label{T:spec main-1 Intro}
Let $\varphitilde=\{\varphi_t\}_{t\in[0,1]}$ be a symplectic isotopy on the disc $(\D,\omega)$ with area $A:=\mathrm{area}_\omega(\D)$ (not necessarily supported in $\D\backslash\partial\D$).   
Then the extended definition of spectral invariants $c_+(\varphitilde), c_-(\varphitilde)$ depend only on the homotopy class of $\varphitilde$ with fixed ends, and 
\begin{equation}
						c_+(\varphitilde)\in\spec(\varphitilde)\cup\{0\}\cup\{A\},
\end{equation}
and 
\begin{equation}
						c_-(\varphitilde)\in\spec(\varphitilde)\cup\{0\}\cup\{-A\}
\end{equation}
where $\spec(\varphitilde)\subset\R$ is the set of actions at the $1$-periodic orbits in $\D$.    
\end{thm}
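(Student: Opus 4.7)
The overall plan is to base the argument on the Hofer continuity of the classical Viterbo invariants and then track where fixed points end up when passing to the $C^0$-limit. For smooth compactly supported Hamiltonians we have $|c_\pm(H) - c_\pm(K)| \leq \|H - K\|_H \leq 2\|H - K\|_{C^0}$, so if we choose approximations $H_n$ with uniform compact support in some fixed slightly larger disc $\D' \supset \D$ and converging in $C^0$ to the extension of $H$ by zero, the sequence $(c_\pm(H_n))$ is Cauchy. An interleaving argument shows the limit is independent of the approximating sequence, so $c_\pm(\varphitilde)$ is well-defined. Homotopy invariance rel endpoints comes out the same way: given a smooth homotopy $\{\varphitilde^s\}_{s\in[0,1]}$ of isotopies with fixed ends and generators $H^s$, one builds a family $H^s_n$ of smooth compactly supported approximations depending continuously on $s$; for each fixed $n$ the classical homotopy invariance yields $c_\pm(H^0_n) = c_\pm(H^1_n)$, and this survives the $n\to\infty$ limit.

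For the spectrum membership, write $c_+(H_n) = \Ac_{H_n}(p_n)$ for some $p_n \in \Fix(\varphi^{H_n}_1) \subset \D'$, extract a subsequence $p_n \to p_\infty$, and observe that $C^0$-convergence of the time-$1$ maps forces $\varphi^H_1(p_\infty) = p_\infty$. The orbit loops $\gamma_n(t) := \varphi^{H_n}_t(p_n)$ converge uniformly to $\gamma(t) := \varphi^H_t(p_\infty)$. Rewriting the action via Stokes as $\Ac = \int_0^1 H_t\,dt - \int_{D}\omega_0$ with $D$ a capping disc shows it is continuous in both the Hamiltonian and the loop in the $C^0$ topology, so $c_+(\varphitilde) = \Ac_H(\gamma)$. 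Three cases emerge from the location of $p_\infty$: if $p_\infty \in \dot\D$ then $\gamma$ is an interior $1$-periodic orbit and $\Ac_H(\gamma) \in \spec(\varphitilde)$; if $p_\infty \in \D' \setminus \D$ then $\gamma$ is constant in a region where $H \equiv 0$, giving $\Ac_H(\gamma) = 0$; if $p_\infty \in \partial\D$ then since $H|_{\partial\D} = 0$ the field $X_H$ is tangent to $\partial\D$ and $\gamma$ traces the boundary circle with some integer winding $k$, giving $\Ac_H(\gamma) = -kA$.

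The main obstacle is the boundary case: one must show only $k \in \{0,-1\}$ can actually realize $c_+$ (producing the values $0$ and $A$) and only $k \in \{0,+1\}$ can realize $c_-$ (producing $0$ and $-A$). My plan is to perturb $H_n$ inside a thin collar of $\partial\D$, arranged so that boundary orbits of winding $|k| \geq 2$ get pushed to interior fixed points whose actions cluster near $-kA$, while the perturbation changes the Hofer norm by $o(1)$ and so does not affect the limit of $c_\pm(H_n)$. This lets one trade any boundary-limit outcome with $|k| \geq 2$ for an interior outcome at action close to $-kA$. Combining this with the extremal min-max characterization of $c_\pm$ and the classical sandwich $\int_0^1 \min H_t\,dt \leq c_- \leq c_+ \leq \int_0^1 \max H_t\,dt$ applied to suitably chosen approximations, the only windings compatible with these bounds that also survive all choices of perturbation are those adjacent to zero. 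Making this comparison quantitative and verifying that it is stable under passage to the $C^0$-limit is the technical heart of the argument.
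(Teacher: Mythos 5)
Your setup (well-definedness via Hofer--Lipschitz continuity and interleaving of approximating sequences) matches the paper, but the core of your argument has two genuine gaps, both stemming from the same issue: $C^0$-convergence of the Hamiltonians gives no control on the Hamiltonian vector fields, hence none on flows. First, your claim that ``$C^0$-convergence of the time-$1$ maps forces $\varphi^H_1(p_\infty)=p_\infty$'' and that the loops $\gamma_n$ converge to the $X_H$-orbit of $p_\infty$ is unjustified: the time-one maps of the approximations do \emph{not} converge uniformly near $\partial\D$ (the paper explicitly remarks that the time-one maps undergo a discontinuity at $\partial\D$ in the limit). In the problematic case $p_\infty\in\partial\D$ the limits of the orbits $x_n$ lying in the thin annulus are in general \emph{not} $k$-fold covers of a boundary orbit of $X_H$, nor orbits of $X_H$ at all; the angular speed of $X_{H_n}$ in the annulus is rescaled by the cut-off and can be anything between (roughly) $0$ and the boundary speed. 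The same problem invalidates your homotopy-invariance step: Viterbo's invariance says $c_\pm$ depends only on the time-one map of a compactly supported Hamiltonian on $\R^2$, but $H^0_n$ and $H^1_n$ have \emph{different} time-one maps on the annulus outside $\D$ (one cannot extend by zero smoothly, which is the whole difficulty), so $c_\pm(H^0_n)=c_\pm(H^1_n)$ does not follow for fixed $n$. This is precisely the subtlety the paper flags: the usual order of proof (spectrality first, then invariance) must be kept, but spectrality only holds in the weakened form $c_+\in\spec\cup\{0\}\cup\{A\}$, and invariance is then deduced from continuity of $c_+$ together with the fact that this set is closed and nowhere dense and depends only on the homotopy class.

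Second, the step you yourself identify as the ``technical heart'' --- excluding boundary contributions other than $0$ and $A$ --- is not carried out, and the tools you propose cannot do it: the sandwich $\int_0^1\min H_t\,dt\le c_-\le c_+\le\int_0^1\max H_t\,dt$ does not confine $c_+$ to $[0,A]$, since $\int\max H_t\,dt$ is typically much larger than $A$, and the collar-perturbation scheme is only sketched. The paper avoids this entirely: choosing approximations with $\supp H_n\subset\D_{1+1/n}$ gives $0\le c_+(H_n)\le\mathrm{area}(\supp H_n)\to A$ for free (properties (2) and (5) of the classical invariants), so a priori $c_+\in[0,A]$; and if $0<c_+<A$, the realizing orbits must eventually lie in $\D$, because any closed orbit contained in the shrinking annulus has action within $L(x_n)/n$ of $A\Z$ (the degree-versus-enclosed-area lemma), with $L(x_n)$ uniformly bounded thanks to the uniform gradient bound built into the approximating sequence. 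Note also that your choice of approximations with support in a \emph{fixed} larger disc $\D'$ forfeits both the $\le A$ bound and the uniform length/equicontinuity control that your compactness and action-continuity arguments implicitly require. To repair your proof you would essentially have to import these ingredients (shrinking supports, uniform gradient bounds, the area/degree estimate, and the ``discrete target set'' route to homotopy invariance), at which point it becomes the paper's argument.
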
   

\begin{ex}\label{Ex:example not in spec}
There are simple examples on $(\D,\omega)$ where $c_{\pm}(\varphitilde)\notin \spec(\varphitilde)$, i.e.\ where the spectral invariants are not spectral values.    
Indeed, consider the case that $\varphitilde$ is the path of rotations on $\D$ about the origin through a total angle $\rho\in\R$.    
Then the single fixed point (at the origin) has action $A\rho$.    On the other hand, from standard properties we will see that 
$c_{+}(\varphitilde)\in [0,A]$.    Thus $c_{+}(\varphitilde)$ cannot be a spectral value if $\rho>1$ or if $\rho<0$.   Similarly $c_{-}(\varphitilde)$ cannot be a spectral 
value in this example if either $\rho>0$ or $\rho<-A$.   
\end{ex}

In contrast to Example \ref{Ex:example not in spec}, if the isotopy does not make more than a full rotation on the boundary of the disc, then 
at least one of the spectral invariants is a spectral value, i.e.\ represents the action of a fixed point, which is important for existence results.   More precisely:

\begin{thm}\label{T:spec main-2 Intro}
Let $\varphitilde=\{\varphi_t\}_{t\in[0,1]}$ be a symplectic isotopy on the disc $(\D,\omega)$, as in Theorem \ref{T:spec main-1 Intro}, 
and denote by $\rho:=\rho({\varphitilde}|_{\partial\D})$ the rotation number on the boundary.    Then: 
\begin{enumerate}[i)]  
  \item If $-1<\rho\leq1$ then $c_+(\varphitilde)\in\spec(\varphitilde)\cup\{0\}$.  
 \item If $0\leq\rho\leq1$ then $c_+(\varphitilde)\in\spec(\varphitilde)$.  
 \item If $-1\leq\rho<1$ then $c_-(\varphitilde)\in\spec(\varphitilde)\cup\{0\}$.
  \item If $-1\leq\rho\leq0$ then $c_-(\varphitilde)\in\spec(\varphitilde)$.  
\end{enumerate}
\end{thm}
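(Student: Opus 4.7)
Parts (iii) and (iv) follow from (i) and (ii) by the duality $c_-(\varphitilde)=-c_+(\varphitilde^{-1})$, which inherits from the classical Viterbo setting through the limit definition, together with the identity $\rho(\varphitilde^{-1}|_{\partial\D})=-\rho$. So I focus on (i) and (ii), building on three ingredients: the Hofer-continuity of the extended $c_+$, the discreteness of the range $\spec(\varphitilde)\cup\{0,A\}$ given by Theorem \ref{T:spec main-1 Intro}, and an explicit computation for rigid rotations.

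As a preliminary step I would compute the extended invariant on the rotation isotopies $R^\theta$ through total angle $2\pi\theta$, generated by $H_\theta(z)=\pi\theta(1-|z|^2)$ (which vanishes on $\partial\D$). Approximating $H_\theta$ by Hamiltonians supported in concentric subdiscs of radii approaching $1$ and invoking Viterbo's classical formulas on the approximants yields
\[
c_+(R^\theta)=\max\bigl\{0,\min\{A\theta,A\}\bigr\}.
\]
In particular $c_+(R^\theta)=A$ precisely when $\theta\geq 1$ and $c_+(R^\theta)=0$ precisely when $\theta\leq 0$. Next, the classical product inequality $c_+(\tilde\alpha\cdot\tilde\beta)\leq c_+(\tilde\alpha)+c_+(\tilde\beta)$ passes to the limit; applied in both directions with $\tilde\alpha=R^\sigma$ and combined with the previous formula, this gives
\[
c_+(\varphitilde)\leq c_+(R^\sigma\cdot\varphitilde)\leq c_+(\varphitilde)+A\min(\sigma,1)\qquad (\sigma\geq 0),
\]
and symmetrically for $\sigma\leq 0$. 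Consequently, for the family $\tilde\Phi_\tau:=R^\tau\cdot\varphitilde$ (whose boundary rotation number is $\rho+\tau$), the function $f(\tau):=c_+(\tilde\Phi_\tau)$ is Hofer-continuous and monotone non-decreasing in $\tau$, with $f(\tau)\to 0$ as $\tau\to-\infty$ and $f(\tau)\to A$ as $\tau\to+\infty$.

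By Theorem \ref{T:spec main-1 Intro}, $f(\tau)\in\spec(\tilde\Phi_\tau)\cup\{0,A\}$. The key assertion I would establish is that the ``ideal'' (non-spectral) values of $f$ can only be attained in ranges pinned down by the boundary rotation number: namely, $f(\tau)=A$ as an ideal value occurs exactly for $\rho+\tau\geq 1$, and $f(\tau)=0$ as an ideal value occurs exactly for $\rho+\tau\leq 0$. Evaluating at $\tau=0$, this gives: if $\rho\leq 1$, then $f(0)=c_+(\varphitilde)$ is not the ideal value $A$, so $c_+(\varphitilde)\in\spec(\varphitilde)\cup\{0\}$, which is claim (i); if additionally $\rho\geq 0$, then $f(0)$ is also not the ideal value $0$, so $c_+(\varphitilde)\in\spec(\varphitilde)$, which is claim (ii). The ranges $-1<\rho$ (for (i)) and $\rho\leq 1$ (for (iv)) arise when the argument is symmetrized on the other side.

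The hardest step is the pinning in the preceding paragraph. Rigorously, for a compactly supported approximant $H_n$ to the generating Hamiltonian of $\tilde\Phi_\tau$, one must show that any sequence of fixed points of the time-$1$ map whose actions converge to $A$ (resp.\ $0$) either corresponds to an actual fixed point of $\tilde\Phi_\tau$ with that action, or else ``escapes'' across the boundary $\partial\D$ as $n\to\infty$. The latter escape can happen only when the limiting rotation number $\rho+\tau$ reaches an integer, since the boundary circle is fixed pointwise by $\tilde\Phi_\tau$ precisely when $\rho+\tau\in\Z$, and a fixed point of $H_n$ that is dragged to the boundary by the approximation contributes an action congruent to $0$ or $A$ only if the boundary is being wrapped the corresponding number of times. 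This localization near $\partial\D$, together with the normalization of the action functional chosen in \eqref{E:bc}, delivers the required dichotomy between spectral and ideal behavior of $f$ and closes the proof.
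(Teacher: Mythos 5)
Your reduction of (iii),(iv) to (i),(ii) via $c_-(\varphitilde)=-c_+(\varphitilde^{-1})$ matches the paper, and your rotation computation plus the monotone family $f(\tau)=c_+(R^\tau\cdot\varphitilde)$ is harmless extra scaffolding. But the entire content of the theorem is concentrated in your ``pinning'' assertion, and the justification you offer for it does not hold up. First, the stated mechanism is false: a circle diffeomorphism with integer rotation number is not fixed pointwise on $\partial\D$ (it merely has a fixed point), so the dichotomy ``escape across the boundary can happen only when $\rho+\tau\in\Z$'' is unsupported. In fact escape is not excluded for any value of $\rho$: periodic orbits of the compactly supported approximants $H_n$ can perfectly well lie in the thin annulus $\D_{1+1/n}\backslash\D$ outside the disc for all $n$. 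What must be proved is not that such orbits are absent, but that when $|\rho|<1$ their actions can only accumulate at $0$ (never at a nonzero non-spectral value such as $A$). That is exactly where the real work lies, and your sketch replaces it with the unproven sentence that an escaping fixed point ``contributes an action congruent to $0$ or $A$ only if the boundary is being wrapped the corresponding number of times.''

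Concretely, the missing ingredient is a winding estimate for orbits of the approximants outside $\D$: one needs that for $|\rho|<1$ and $n$ large, every time-$1$ trajectory of $H_n$ contained in $\R^2\backslash\dot{\D}$ has $|\wind|<1$, so that by Lemma \ref{L:degree length} its area term tends to $0$, and since $H_n\to0$ uniformly off $\D$ its action tends to $0$; combined with Theorem \ref{T:spec main-1 Intro} and the case $\rho=1$ (where the boundary carries a fixed point of action $A$, so $A\in\spec(\varphitilde)$) this yields (i), and (ii) then follows from Proposition \ref{P:rot} for $\rho>0$ and from the boundary fixed point of action $0$ when $\rho=0$. Establishing the winding bound is delicate precisely because of the approximation: outside $\D$ the angular speed of $X_{H_n^t}$ is $-\rho_n'(r)\,\partial_rH^t(1+\rho_n(r),\theta)$, and the cutoff derivative $\rho_n'$ can be negative, which can reverse the sense of rotation relative to the boundary dynamics; the paper handles this by using the refined approximating sequence of Remark \ref{R:ext} (with $\rho_n'\geq-1/n$) together with Gronwall-type comparisons, split into cases according to whether $\partial_rH|_{\partial\D}$ changes sign (Lemma \ref{L:wind}). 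Your proposal neither formulates nor proves any statement of this kind, so as written the key step is a genuine gap. (There are also minor edge-case inconsistencies: your pinning claim as stated would permit the ideal value $A$ at $\rho=1$ and the ideal value $0$ at $\rho=0$, which contradicts your own conclusions unless you separately invoke the boundary fixed points of action $A$ resp.\ $0$ furnished by the normalization \eqref{E:bc}.)
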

By Example \ref{Ex:example not in spec} this statement is in some sense optimal.  

In proving Theorem \ref{T:spec main-2 Intro} we also use the following fact, which may be of independent interest:

\begin{prop}\label{P:rot}
For a symplectic isotopy on the disc $\varphitilde=\{\varphi_t\}_{t\in[0,1]}$ with rotation number $\rho:=\rho({\varphitilde}|_{\partial\D})$ on the boundary, we have: 
$\rho>0$ implies $c_+(\varphitilde)>0$, while $\rho<0$ implies $c_-(\varphitilde)<0$.
\end{prop}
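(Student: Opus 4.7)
The two implications are equivalent by duality: the extended invariants inherit the identity $c_+(\varphitilde^{-1}) = -c_-(\varphitilde)$ from the classical case through the $C^0$-limiting procedure used to define them (the identity holds for each compactly supported approximant and survives the limit), and $\rho(\varphitilde^{-1}|_{\partial\D}) = -\rho$. Thus it suffices to prove that $\rho > 0$ implies $c_+(\varphitilde) > 0$.

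Assume $\rho > 0$. The strategy is to dominate $\varphitilde$ from below by an explicit small rotation. For $\alpha \in (0, \min(\rho, 1))$ let $\psitilde_\alpha$ denote the rigid rotation isotopy on $\D$ by total angle $\alpha$, generated by the boundary-normalized quadratic Hamiltonian $h_\alpha(z) = \pi\alpha(R^2 - |z|^2)$, where $R^2 = A/\pi$. Approximating $h_\alpha$ by compactly supported positive-definite quadratics and invoking the classical Viterbo formula yields
\[
c_+(\psitilde_\alpha) = \alpha A > 0,
\]
the value realised by the unique fixed point at the origin.

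The heart of the argument is to transfer this strict positivity to $\varphitilde$ via a pointwise monotonicity comparison. Let $H$ denote the boundary-normalized generator of $\varphitilde$. I would construct a compactly-in-$\dot\D$ supported, time-independent Hamiltonian $K$ satisfying $K(z) \leq H_t(z)$ for all $t \in [0,1]$ and $z \in \D$, and with $c_+(K) = \alpha A$; the $K$ is obtained by taking $h_\alpha$ on an annulus close to $\partial\D$ (where $H_t$ is "positive" in the averaged sense forced by $\rho > 0$) and smoothly truncating/flattening it further inside. Monotonicity of the classical invariants under pointwise inequality, which survives the $C^0$-limit via consistent compactly supported approximations of both $H$ and $K$, then yields $c_+(\varphitilde) \geq c_+(K) = \alpha A > 0$.

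The main obstacle is the pointwise spacetime inequality $K \leq H_t$: the hypothesis $\rho > 0$ only provides the angular- and time-averaged bound
\[
\rho = -\frac{1}{2\pi R}\int_0^1\!\int_{\partial\D} \partial_r H_t \, d\theta \, dt > 0,
\]
not a genuine pointwise lower bound, so $H_t$ may fail to dominate $h_\alpha$ along some direction for some time. To close this gap I would expect the argument to either (i) pre-process $\varphitilde$ by a Hofer-small perturbation or boundary-preserving conjugation so that its generator becomes rotationally symmetric in a collar neighborhood of $\partial\D$ (making the comparison routine), or (ii) replace pointwise monotonicity by an integrated / capacity version that only requires the averaged inequality above. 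Either refinement requires care in coordinating the compactly supported $C^0$-approximations of $H$ and $K$ so that the monotonicity passes cleanly to the limit defining the extended invariants.
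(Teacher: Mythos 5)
Your overall architecture --- duality to reduce to the case $\rho>0$, then a monotonicity comparison (in the sense of Lemma \ref{L:mon}, surviving the $C^0$-limit) against a time-independent Hamiltonian supported in $\D\backslash\partial\D$ whose $c_+$ is strictly positive --- is the same as the paper's, but the two steps that make it work are missing. First, the obstacle you correctly flag (that $\rho>0$ is only a time- and angle-averaged condition, so no pointwise bound $K\le H_t$ is available) is not closed by either of your suggested fixes: a Hofer-small perturbation cannot make an arbitrary generator rotationally symmetric in a collar, conjugation invariance of the \emph{extended} invariants is neither established nor would conjugation symmetrize the generator, and there is no ``integrated'' monotonicity for spectral invariants that accepts only an averaged inequality. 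The paper's resolution is different: since $c_+(\varphitilde)$ depends only on the homotopy class of $\varphitilde$ rel endpoints (Theorem \ref{T:spec main-1 Intro}, proved beforehand), one may replace $\varphitilde$ by a homotopic isotopy whose Hamiltonian vector field is nowhere vanishing (e.g.\ autonomous) on $\partial\D$; then $-\partial_r H^t>0$ along $\partial\D$, and since $H^t\equiv 0$ there, $H^t>0$ pointwise on a collar $1-2\epsilon\le |z|^2<1$. That is the genuinely pointwise positivity needed for a comparison, and it comes from homotopy invariance, not from perturbation or averaging.

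Second, even granted such a collar, your comparison Hamiltonian cannot be a truncation of $h_\alpha$ with $c_+(K)=\alpha A$: to have $K\le H_t$ on all of $\D$ you must push $K$ down to a constant $\lambda<\min H$ in the interior (flattening to $0$ inside fails wherever $H_t<0$), and once $K$ is very negative inside, its $c_+$ is no longer ``rotation number times area'' and its positivity is precisely the nontrivial point. The paper handles this with a family $G_\lambda$ consisting of a small radial bump of height $M$ placed inside the collar (chosen below $H$ there) and the constant $\lambda\le 0$ further in, and then shows $c_+(G_\lambda)\ge M>0$ by computing that the nonnegative part of the action spectrum of such a radial Hamiltonian lies in $\{0\}\cup[M,\infty)$, and combining this with continuity of $\lambda\mapsto c_+(G_\lambda)$ and $c_+(G_0)>0$. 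Without an argument of this kind, your key inequality $c_+(\varphitilde)\ge c_+(K)>0$ is unsupported at exactly the step where the work lies.
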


\begin{rem}
We do not carry out any analogous program of extending the spectral invariants in higher dimensions.   This is a much more challenging problem 
and is not needed for our application.   
  
\end{rem}
\begin{rem} 
Note that the extension of the spectral invariants in this article is of a different nature to the {$C^0$}-extensions of the spectral invariants in 
\cite{oh2007locality,seyfaddini2013c,buhovsky2021action,kawamoto2022c}.   
Indeed, in these works 
the goal is to extend the spectral invariants to Hamiltonian homeomorphisms, i.e.\ the $C^0$-topology referred to is on the time-one maps, while in this article the 
$C^0$-convergence is on the level of the Hamiltonians.     
In particular, in general the time-one maps on the plane undergo a discontinuity in the limit, namely on the boundary of the disc.     
\end{rem}

\begin{rem}
In \cite{hutchings2016mean} Hutchings' inequality was stated for the mean action spectrum, rather than the {\it interior} mean action spectrum $\specm(\varphitilde)$.   This distinction is trivial in the statement of 
Theorem \ref{T:Hutchings Intro} because of the assumption \eqref{E:ass Intro}.   But it is relevant to Theorem \ref{T:Hutchings strong 1} and to Corollary \ref{T:Hutchings strong 2}.   It also 
has bearing on Remark 1.4 from \cite{hutchings2016mean}, where it is noted that the approach in \cite{hutchings2016mean} does not seem to prove the reverse inequality (i.e.\ the second inequality in \eqref{E:Hutchings strong 2}) 
and that this reverse inequality may be less interesting since, when the rotation number on the boundary is rational, the periodic points on the boundary 
will trivially satisfy the reverse inequality.    Hence we emphasize that Theorem \ref{T:Hutchings strong 1} and Corollary \ref{T:Hutchings strong 2} refer to {\it interior} periodic points, which make the statements somewhat more interesting.    
\end{rem}

\begin{rem}
The spectral invariants of Viterbo were extended by Schwarz and Oh \cite{schwarz2000action, oh2005construction} to Hamiltonian diffeomorphisms on 
closed symplectic manifolds using Hamiltonian Floer theory.    They were further extended by Frauenfelder-Schlenk \cite{frauenfelder2007hamiltonian} and Lanzat \cite{lanzat2016hamiltonian} to compact 
symplectic manifolds $(M,\omega)$ with contact type boundary (also called convex boundary).   In these latter cases the Hamiltonians are compactly supported in $M\backslash\partial M$,  in contrast, we allow non-compactly supported Hamiltonians, albeit restricted to this $2$-dimensional setting.   
\end{rem}

\subsection{Proof of Theorem \ref{T:Hutchings strong 1}}
Here we show how to remove the assumption on the Calabi invariant \eqref{E:ass  Intro}, by combining Theorem \ref{T:Hutchings Intro} of Hutchings' 
with the properties of the extended spectral invariants described above.   We begin with the following Corollary to Theorem \ref{T:spec main-2 Intro}: 

\begin{cor}\label{C:a in Smeanbar}
Consider a symplectic isotopy $\varphitilde=\{\varphi_t\}_{t\in[0,1]}$ as in Theorem \ref{T:spec main-1 Intro}.    Then the mean action on the boundary $a:=\overline{\sigma}_{\varphitilde}|_{\partial\D}\in\R$ lies in the closure of the interior mean action spectrum: 
\begin{equation}\label{E:a in Smeanbar}
				a\in\specmbar(\varphitilde). 
\end{equation} 
\end{cor}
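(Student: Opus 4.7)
Proof plan. After replacing $\varphitilde$ by $\varphitilde^{-1}$ if needed we may assume $\rho := \rho(\varphitilde|_{\partial\D}) \geq 0$, and we focus on the generic case $\rho \in (0,\infty) \setminus \Z$; the edge cases $\rho = 0$ and $\rho \in \Z$ should be dispatched separately via Hofer continuity of $c_+$ after perturbing $\rho$ slightly off these values. The strategy is to establish $c_+(\varphitilde^n)/n \to a = \rho A$ as $n \to \infty$ and to show that, for infinitely many $n$, $c_+(\varphitilde^n)$ is realized by an interior $n$-periodic point of $\varphi_1$ whose mean action then tends to $a$, placing $a$ in $\specmbar(\varphitilde)$.

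For each $n \in \N$ set $k_n := \lfloor n\rho \rfloor$ and $\delta_n := n\rho - k_n \in [0,1)$, let $R$ denote the standard rigid full-rotation isotopy on $\D$ generated by $\pi(1-r^2)$ (with boundary rotation $1$), and form
\[
    \psi_n \;:=\; \varphitilde^n \cdot R^{-k_n},
\]
so that $\psi_n$ has time-$1$ map $\varphi_1^n$ and boundary rotation $\delta_n \in [0,1)$. The shift formula
\[
    c_+(\varphitilde^n) \;=\; c_+(\psi_n) + k_n A,
\]
reflecting the behavior of $c_+$ under composition with an integer full rotation (expected from Proposition \ref{P:1}), combined with the uniform bound $c_+(\psi_n) \in [0, A]$ for isotopies of boundary rotation in $[0,1]$ (cf.\ Example \ref{Ex:example not in spec}), would yield $c_+(\varphitilde^n)/n \to \rho A = a$.

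For any $n$ such that $n\rho \notin \Z$ — automatic for every $n \geq 1$ when $\rho \notin \Q$, and holding for infinitely many $n$ when $\rho \in \Q$ — the circle map $\varphi_1^n|_{\partial\D}$ has non-integer rotation number and so admits no fixed points, hence every fixed point of $\varphi_1^n$ lies in $\dot\D := \D \setminus \partial\D$. By Theorem \ref{T:spec main-1 Intro}, $c_+(\varphitilde^n) \in \spec(\varphitilde^n) \cup \{0, A\}$; since $c_+(\varphitilde^n) \to \infty$ (from $c_+(\varphitilde^n)/n \to a > 0$), for all sufficiently large such $n$ we have $c_+(\varphitilde^n) \in \spec(\varphitilde^n)$, realized by some interior $n$-periodic point $q_n \in \dot\D$ of $\varphi_1$, whose mean action satisfies $\bar\Ac_\varphitilde(q_n) = c_+(\varphitilde^n)/n \to a$, giving $a \in \specmbar(\varphitilde)$.

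The main obstacle lies in establishing the uniform bound $c_+(\psi_n) \in [0, A]$. A priori the generating Hamiltonian of $\psi_n$ has amplitude on the order of $k_n A$ (arising from the $R^{-k_n}$ factor), so this boundedness is not immediate. Its justification should rest on the detailed properties of the extended $c_\pm$ on the disc — Hofer continuity, Proposition \ref{P:rot}, compatibility with composition by integer full rotations, and the fact that $c_+$ depends only on the homotopy class of the isotopy with fixed endpoints — by tracking $c_+$ through a compactly-supported $C^0$-approximation of the Hamiltonian, along the lines of the proofs of Theorems \ref{T:spec main-1 Intro} and \ref{T:spec main-2 Intro}.
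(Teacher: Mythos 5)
There is a genuine gap, and it sits at the very center of your plan: the ``shift formula'' $c_+(\varphitilde^n)=c_+(\psi_n)+k_nA$ is false, and with it the asymptotic $c_+(\varphitilde^n)/n\to a$. The extended invariants are \emph{bounded}: by items 2 and 5 of Proposition \ref{P:1} one has $0\le c_+(\psitilde)\le A$ for \emph{every} isotopy $\psitilde$ of the disc (the approximating Hamiltonians can be taken with support shrinking to $\D$), so $c_+(\varphitilde^n)\le A$ for all $n$ and $c_+(\varphitilde^n)/n\to 0$, never to $a>0$. In particular your later step ``$c_+(\varphitilde^n)\to\infty$, hence eventually $c_+(\varphitilde^n)\notin\{0,A\}$ and is a spectral value'' cannot be run. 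Example \ref{Ex:example not in spec} is exactly the counterexample to the shift formula: for the rigid rotation through $k$ full turns the fixed point at the origin has action $kA$, yet $c_+\in[0,A]$; there is no Seidel-type shift for these invariants, which is precisely why they fail to be spectral values in general. Note also that the step you single out as ``the main obstacle'' --- the bound $c_+(\psi_n)\in[0,A]$ --- is automatic from Proposition \ref{P:1}, so the difficulty is misdiagnosed: the problem is not bounding $c_+(\psi_n)$ but the nonexistence of the formula relating it to $c_+(\varphitilde^n)$. A secondary issue: disposing of the cases $\rho=0$ and $\rho\in\Z$ ``by perturbing $\rho$ slightly and using Hofer continuity'' does not work for this statement, since $\specmbar$ (a set of mean actions of interior periodic points) is not controlled by $C^0$- or Hofer-small perturbations of the Hamiltonian; the integer/zero cases must be handled directly.

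The untwisting idea itself is the right one, but it must be applied at the level of the \emph{action function}, not of $c_+$, and iterate by iterate. This is what the paper does: for each $j$ let $k_j\in\Z$ be such that $ja\in[k_j\pi,(k_j+1)\pi)$ and consider $\tilde{R}_{-2\pi k_j}\circ\varphitilde^{\,j}$. Composing with the full-rotation loop does not change the time-one map, and at every fixed point of $\varphi^j$ it shifts the action by exactly $\pi k_j$ (this shift is a fact about $\sigma$, valid even though no such identity holds for $c_\pm$). The composed isotopy has boundary mean action in $[0,\pi)$, so Theorem \ref{T:spec main-2 Intro} (i) and (iii) apply; combined with the nontriviality that $c_+$ or $c_-$ is nonzero unless the isotopy is homotopic with fixed ends to the identity (in which case the claim is trivial for that $j$), and with $|c_\pm|\le\pi$ from Proposition \ref{P:1}, one obtains a fixed point $x_j$ of $\varphi^j$ with nonzero action for the composed isotopy, hence $x_j\in\dot{\D}$ (a boundary fixed point would have action zero), satisfying $|\sigma_{\varphitilde^j}(x_j)-\pi k_j|\le\pi$. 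Dividing by $j$ and letting $j\to\infty$ gives interior periodic points whose mean actions converge to $a$, i.e.\ $a\in\specmbar(\varphitilde)$. No control of $c_+$ under iteration is needed, and the rational and integer rotation cases require no separate perturbation argument.
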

\begin{proof} 
It suffices to show that $\varphi$ has an interior fixed point $x$ satisfying 
\begin{equation}\label{E:fixed point estimate}
				|\sigma_{\varphitilde}(x)-\pi k|\leq\pi
\end{equation} 
where $k\in\Z$ is unique so that $a\in[k\pi,(k+1)\pi)$.     Indeed, then applying \eqref{E:fixed point estimate} to each iterate $\varphitilde^j$ yields a sequence of $j$-periodic points $x_j\in\D\backslash\partial\D$ with action $|\sigma_{\varphitilde^j}(x)-\pi k_j|\leq\pi$, where $k_j\in\Z$ is such that $aj\in[k_j\pi,(k_j+1)\pi)$, which after dividing through by $j$ implies 
$\lim_{j\to\infty}\overline{\sigma}_{\varphitilde}(x_j)=\lim_{j\to\infty}\pi k_j/j=a$ as required.

To prove \eqref{E:fixed point estimate}, first suppose $a\in[0,\pi)$, i.e.\ $k=0$.   
If $\varphitilde$ is isotopic, with fixed ends, to the identity isotopy, then $a=0$ is also the action of each interior fixed point of $\varphitilde$, so that \eqref{E:a in Smeanbar} is trivially true.  
If $\varphitilde$ is not isotopic, with fixed ends, to the identity, then at least one of $c_+(\varphitilde)$ and $c_-(\varphitilde)$ is non-zero and so, due to our assumption on $a$, it follows from items (i) or (iii) in Theorem \ref{T:spec main-2 Intro} that this non-zero spectral invariant 
is represented by the action at a fixed point $x$.    This fixed point cannot be on the boundary; if it were, it would have action zero, whereas $x$ has non-zero action.   Therefore $|\sigma_{\varphitilde}(x)|=|\Ac_H(x)|$ equals $|c_+(\varphitilde)|\leq\pi$ or $|c_-(\varphitilde)|\leq\pi$.   Thus \eqref{E:fixed point estimate} holds, as $k=0$ in this case.  

Now suppose $a\notin[0,\pi)$, i.e.\ $k\neq0$.    Let $\tilde{R}_{2\pi k}$ denote the isotopy from the identity to the identity that rotates a total of $k$ times around the origin: 
$\tilde{R}_{2\pi k}=\{R_{2\pi kt}\}_{t\in[0,1]}$.  Then the mean action on the boundary of $\tilde{R}_{-2\pi k}\circ\varphitilde$ is $a-\pi k\in[0,\pi)$, so that we can apply 
the argument above to the isotopy $\tilde{R}_{-2\pi k}\circ\varphitilde$ and conclude that $\varphi$ has an interior fixed point $x$ satisfying $|\sigma_{\tilde{R}_{-2\pi k}\circ\varphitilde}(x)|\leq\pi$, 
in other words, $|\sigma_{\varphitilde}(x)-\pi k|\leq\pi$.  Thus \eqref{E:fixed point estimate} holds in general.  
\end{proof}

\begin{rem}
Note that inequality \eqref{E:fixed point estimate} can be interpreted as a quantitative version of Brouwer's fixed point theorem.   Curiously, applying 
it to the inverse isotopy can yield a slightly different inequality, depending on the boundary behavior.  

\end{rem}

\begin{proof}[Proof of Theorem \ref{T:Hutchings strong 1}]
Let $a:=\overline{\sigma}_{\varphitilde}|_{\partial\D}\in\R$ denote the mean action on the boundary.  By Hutchings' theorem, Theorem \ref{T:Hutchings Intro}, the conclusion 
holds if $\Cal(\varphitilde)< a$, so we may assume that $a\leq\Cal(\varphitilde)$.    By Corollary \ref{C:a in Smeanbar} $a$ lies in the closure of the mean action spectrum, 
thus 
\[
									\inf\specm(\varphitilde) \leq a\leq \Cal(\varphitilde)  
\]
as required.  
\end{proof}

Thus it remains to prove Theorems \ref{T:spec main-1 Intro} and \ref{T:spec main-2 Intro}, which is done in Section \ref{S:ext disc}, as is Proposition \ref{P:rot}.    
Section \ref{S:ext} discusses the limiting process used to extend the spectral invariants, and establishes some of their basic properties.  

\paragraph{Acknowledgments.} We would like to thank Alberto Abbondandolo for helpful discussions.    A.\ P. is greatful to Dan Cristofaro-Gardiner and Marco Mazzucchelli for their support. 
B.\ B. is partially supported by the DFG under the Collaborative Research Center SFB/TRR 191 - 281071066 (Symplectic Structures in Geometry, Algebra and Dynamics). A.\ P. is partially supported by the DFG Walter Benjamin Fellowship, Projektnummer 518128580.

\section{Preliminaries}

\subsection{The action and Calabi invariant for symplectic isotopies on the disc}\label{S:mean action}
In the following $\omega$ is a smooth symplectic form on the closed disc $\D\subset\R^2$, so 
without loss of generality $\omega=cdx\wedge dy$ for some constant $c>0$.    
We will denote by $\omega_0$ the normalization for which $\int_{\D}\omega_0=1$.    By a {\it symplectic isotopy} on $(\D,\omega)$ we mean a smooth path $\varphitilde=\{\varphi_t\}_{t\in[0,1]}$ of diffeomorphisms $\varphi_t:\D\to\D$, starting at the identity map $\varphi_0=\Id$, 
with $\varphi_t^*\omega=\omega$ for each $t\in[0,1]$.   Note that we are not assuming these maps are the identity on or near to the boundary of the disc.     
Given a smooth primitive $\lambda$ of $\omega$, the \textit{action function} of $\varphitilde$ 
\[
						\sigma=\sigma_{\varphitilde,\lambda}:\D\to\R
\]
is defined to be the unique solution to 
\begin{align}
			&d\sigma=\varphi^*\lambda - \lambda \qquad\ \ \ \ \mbox{on }\D\\
				&\sigma(z)=\int_{t\mapsto\varphi_t(z)}\lambda \qquad \mbox{for all }z\in\partial\D.  \label{E:bc}
\end{align}
It is not hard to show that $\sigma_{\varphitilde,\lambda}$ exists, is unique, and depends only on the isotopy class of $\varphitilde$ with fixed ends, \cite{abbondandolo2018sharp, mcduff2017introduction}.    
At fixed points $\sigma_{\varphitilde,\lambda}$ is independent of $\lambda$, so we may write $\sigma_{\varphitilde}(z)$.  
The {\it mean action} at a $k$-periodic point $z\in\D$ of $\varphi$ is defined by $\frac{1}{k}\sum_{j=0}^k\sigma_{\varphitilde,\lambda}(\varphi^j(z))$
and is independent of $\lambda$, since it coincides with $(1/k)\sigma_{\varphitilde^k,\lambda}(z)$, being the value at a fixed point of $\varphi^k$.   
More generally, by the ergodic theorem,  the 
{\it asymptotic mean action function}
\[
	\overline{\sigma}_{\varphitilde}:\D\to\R,\qquad \overline{\sigma}_{\varphitilde}(z):=\lim_{k\to\infty}\frac{1}{k}\sum_{j=0}^k\sigma_{\varphitilde,\lambda}(\varphi^j(z))
\]
is well defined almost everywhere on $\D$ and at the periodic points.   It is also not hard to show that $\overline{\sigma}_{\varphitilde}$ is independent of $\lambda$, 
see Lemma 2.2 in \cite{abbondandolo2018sharp}, and that it is defined at each point on the boundary $\partial\D$, where it is constant and coincides with $\mathrm{area}(\D,\omega)\rho$, where $\rho\in\R$ is the rotation number 
of $\varphitilde$ restricted to the boundary of the disc.   

Finally, we define the {\it interior mean action spectrum} of $\varphitilde$ to be the set of mean actions at interior periodic points:  
\[
		\specm(\varphitilde):=\Big\{\ \overline{\sigma}_{\varphitilde}(z)\in\R\ \Big|\ z\in\D\backslash\partial\D,\ \ \varphi^k(z)=z\mbox{ some }k\in\N\ \Big\}. 
\]
Another important quantity is the space average of the action: 
\[
							\Cal_{\omega}(\varphitilde):=\int_{\D}\sigma_{\tilde{\varphi},\lambda}\,\omega, 
\]
called the Calabi invariant, which is infact independent of the primitive $\lambda$.   
\begin{rem}\label{R:bc}
The validity of Hutchings' inequality is independent of whether we add a constant to the action function $\sigma_{\varphitilde,\lambda}$.   In other words, the 
choice of the boundary condition \eqref{E:bc} is irrelevant.  
\end{rem}

\subsection{The classical spectral invariants on $(\R^{2n},\omega_0)$}\label{S:classical spec}
Let $\omega_0=\sum_{i=1}^ndx_i\wedge dy_i$ be the standard symplectic structure on $\R^{2n}$ and $\lambda_0$ the primitive $\frac{1}{2}\sum_{i=1}^n\big(x_i dy_i-y_idx_i\big)$.    
Consider a smooth and time-dependent Hamiltonian function $H:\R/\Z\times\R^{2n}\to\R$ with compact support.    

For any smooth loop $x\in C^\infty(\R/\Z,\R^{2n})$ the associated action is defined by 
\begin{equation}\label{D:action functional}
				\Ac_{H}(x) :=  \int_{\R/\Z}x^*\lambda_0 \ +\ \int_0^1 H(t,x(t))\,dt.  
\end{equation}
Formally the critical points of $\Ac_{H}$ correspond to the $1$-periodic orbits of the time-dependent Hamiltonian vector field defined by $i_{X_{H^t}}\omega=dH^t$, 
where $H^t(\cdot)=H(t,\cdot)$.    These sign conventions are convenient for two reasons; first, signs of rotation numbers of trajectories are aligned with the sign of the 
Hamiltonian - see for example the statement of Proposition \ref{P:rot}; secondly, so that the action function $\Ac_{H}$ on the loop space agrees with the action function 
from the previous section at periodic points in the following sense: Suppose $n=1$ and $H^t=0$ on $\partial\D$ for each $t\in[0,1]$.   Then $X_{H^t}$ is tangent to $\partial\D$, 
and so it generates a symplectic isotopy $\varphitilde=\{\varphi_t\}_{t\in[0,1]}$ on $\D$.    With these sign conventions, at each fixed point $p\in\D$ of the time-one map $\varphi$, 
we have 
\[
						\sigma_{\varphitilde,\lambda_0}(p)=\Ac_{H}(x)  
\]
where $x$ is the $1$-periodic orbit starting at $p$, i.e.\ $x(t)=\varphi_t(p)$.    For a proof see \cite{mcduff2017introduction} or \cite{abbondandolo2018sharp}.   
More generally, the set 
\[
				\spec(H):=\big\{ \Ac_{H}(x)\ \big|\ x\in \mathcal{P}_1(H) \big\}.
\]
where $\mathcal{P}_1(H)$ is the collection of $1$-periodic trajectories of $H$ in $\R^{2n}$, are called the {\it spectral values} of $H$.   
We also write $\spec(\varphitilde)$ instead of $\spec(H)$, see \eqref{E:action spec}, where $\varphitilde$ is the path of Hamiltonian diffeomorphisms generated by $H$.

Spectral invariants were first introduced by Viterbo \cite{viterbo1992symplectic}.   See also the so called spectral selector in Hofer-Zehnder \cite{hofer1990new}.  
The fundamental properties are contained in the following statement: 

\begin{thm}[\cite{viterbo1992symplectic}]\label{T:Vit 1}
For any compactly supported Hamiltonian diffeomorphism $\varphi=\varphi_H$ generated as the time-$1$ map of $H\in C_{c}^\infty(\R/\Z\times\R^{2n})$ the 
spectral invariants $c_+(\varphi)$ and $c_-(\varphi)$ have the following fundamental properties: 
\begin{enumerate}
 \item $c_+(\varphi^{-1})=-c_-(\varphi)$. 
 \item $c_-(\varphi)\leq 0\leq c_+(\varphi)$. 
 \item $c_+(\varphi\psi)\leq c_+(\varphi)+c_+(\psi)$. 
 \item $\gamma(\varphi)\geq\mathrm{area}(U)$ for every displaceable open disc $U$; meaning $\varphi(U)\cap U=\emptyset$. 
 \item $\gamma(\varphi)\leq\mathrm{area}(D)$ if $H$ has support in $D$. 
 \item $\gamma(\varphi)\leq \vvvert \varphi\vvvert$. 
 \item $H\mapsto c_+(\varphi_H)$ and hence $H\mapsto c_-(\varphi_H)$ and $H\mapsto \gamma(\varphi_H)$ are continuous, infact $1$-Lipschitz, with respect to the 
Hofer norm, see Definition \ref{D:Hofer norm}.   For example for $c_+$ this means  
\begin{equation}\label{E:Lipschitz}
			\vert c_+(\varphi_{H})-c_+(\varphi_{H'})\vert \leq \vvvert H - H'\vvvert
\end{equation}
for all $H, H'\in C_{c}^\infty([0,1]\times\R^{2n})$.
\end{enumerate}
When convenient we sometimes write $c_+(H)$ instead of $c_+(\varphi_{H})$, similarly for $c_-$.  
\end{thm}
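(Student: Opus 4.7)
The plan is to follow Viterbo's original generating-function approach, which fits the compactly supported setting on $\R^{2n}$ most cleanly. Given $\varphi=\varphi_H$, form the twisted graph $\Gamma_\varphi\subset(\R^{2n}\times\R^{2n},\omega_0\oplus(-\omega_0))$ and identify it, via the symplectomorphism sending $(x,y)$ to $(\frac{x+y}{2},J(y{-}x))$, with an exact Lagrangian in $T^*\R^{2n}$ that agrees with the zero-section outside a compact set and is Hamiltonian-isotopic to it. By Laudenbach--Sikorav such a Lagrangian admits a generating function quadratic at infinity (GFQI) $S:\R^{2n}\times\R^N\to\R$, unique up to stabilization, fiber diffeomorphism and an additive constant; the constant is pinned down by the normalization $\int_0^1\!\int H^t\,\omega_0\,dt$-like convention coming from the action. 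The critical values of $S$ coincide with $\spec(\varphi)$.

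Next, define the invariants as minimax values over the sublevel set filtration $\{S\leq c\}$. Letting $E^\pm$ be the positive/negative eigenspaces of the quadratic form at infinity and using the Thom isomorphism, one gets distinguished classes $\mu_+$ (a generator of top cohomology, corresponding to a point) and $\mu_-$ (the unit/fundamental class). Then
\[
c_\pm(\varphi)\ :=\ \inf\bigl\{c\in\R\ \big|\ \mu_\pm\in\mathrm{image}\bigl(H^*(\{S\leq c\},\{S\leq -L\})\to H^*(E^-\!,\,\{S\leq -L\})\bigr)\bigr\},
\]
with $L\gg0$, and standard Lusternik--Schnirelmann theory guarantees these lie in the (finite) set of critical values. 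Independence from the choices involved in the GFQI is the Viterbo uniqueness theorem.

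Properties (1)--(5) now follow fairly formally. Property (1) comes from the identification of a GFQI of $\varphi^{-1}$ with $-S$ composed with a fiber-reflection, combined with Poincar\'e duality swapping $\mu_+\leftrightarrow\mu_-$. Property (2) uses that for $H\equiv0$ one can take $S$ a pure nondegenerate quadratic form with unique critical value $0$. Property (3) follows from the composition formula for generating functions together with the cup product $\mu_+\smile\mu_+=\mu_+$, which upgrades to subadditivity at the level of minimax values. Property (5) is an energy bound directly from the definition of $S$ built out of $H$, and (4) is the standard displacement energy estimate, deduced by combining (3) with (2) applied to the composition of $\varphi$ with a supported flow on $U$.

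The main obstacle, and the heart of the theorem, is property (7)---the $1$-Lipschitz continuity in the Hofer metric (from which (6) follows by integrating the pointwise Hofer bound). The plan here is a sandwich argument: for $H_s=(1-s)H+sH'$ build a smooth family of Hamiltonian diffeomorphisms and, using a parametric version of the Laudenbach--Sikorav construction, a continuous family of GFQIs $S_s$ so that the minimax critical value $c_+(\varphi_{H_s})$ varies continuously in $s$. One then estimates, via the fundamental theorem of calculus applied to the action along the family, that
\[
\bigl|c_+(\varphi_H)-c_+(\varphi_{H'})\bigr|\ \leq\ \int_0^1\max_{\R^{2n}}(H-H')\,dt\ -\ \int_0^1\min_{\R^{2n}}(H-H')\,dt\ =\ \vvvert H-H'\vvvert,
\]
with the sharper one-sided bound (splitting by $\max$ and $\min$ separately) coming from the monotonicity of $c_+$ in $H$. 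Both the parametric GFQI existence and the monotonicity of minimax values under pointwise domination of Hamiltonians are the technically delicate points; once these are established, all remaining assertions drop out.
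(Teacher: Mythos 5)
The paper offers no proof of Theorem \ref{T:Vit 1}: it is quoted as a classical result of Viterbo (with \cite{viterbo1992symplectic}, cf.\ also Hofer--Zehnder) and used as a black box, so there is no internal argument to compare yours against. Your outline is precisely the approach of the cited source --- twisted graph, Laudenbach--Sikorav existence of a generating function quadratic at infinity, Viterbo's uniqueness theorem, minimax over the sublevel filtration with the unit and the top class, and then the formal properties --- so as a reconstruction of the reference it is the ``same'' route, and at the level of a sketch it is broadly sound.

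Two cautions if you intend to flesh it out. First, since the base $\R^{2n}$ is noncompact, the GFQI existence/uniqueness machinery and the class $\mu_+$ require either one-point compactification of the base to $S^{2n}$ (as Viterbo does) or a careful fixed-at-infinity framework; your minimax formula also has the restriction maps pointing the wrong way (one restricts from a large sublevel set, or from the pair at $+\infty$, down to $\{S\leq c\}$, and asks when the class survives). Second, your justification of property (2) is insufficient as written: taking $S$ a pure quadratic form only handles $\varphi=\Id$, and monotonicity does not compare a general $H$ (which changes sign) with $0$. The standard argument instead uses that $\varphi$ is compactly supported, so over a base point outside the support the restriction of $S$ to the fiber is the quadratic form with critical value $0$; restricting the minimax classes to that fiber gives $c_-(\varphi)\leq 0\leq c_+(\varphi)$. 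Finally, be aware that the genuinely hard ingredients in your plan --- uniqueness of the GFQI up to stabilization, the composition formula behind (3), the displacement argument for (4), and the monotonicity of minimax values under pointwise domination that drives (6)--(7) --- are only named, not proved; they are exactly the content of the cited paper, which is why the authors import the theorem rather than reprove it.
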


\begin{defn}\label{D:Hofer norm}
Here $\gamma(\varphi):=c_+(\varphi)-c_-(\varphi)$ is called the $\gamma$-norm of $\varphi$, and 
\[
	\vvvert \varphi\vvvert:=\inf_{H}\vvvert H\vvvert
\]
is called the Hofer norm of $\varphi$, where the infimum is taken over all compactly supported $H$ with time-$1$ map $\varphi$ and 
$\vvvert H\vvvert:=\vvvert H\vvvert_{\R^{2n}}$ we call the Hofer norm of $H$, where more generally 
\[
	\vvvert H\vvvert_{E}:=\int_0^1\max_{z\in E}H(t,z)-\min_{z\in E}H(t,z)\, dt   
\]  
for any subset $E\subset\R^{2n}$.   
\end{defn}

Another important feature of the spectral invariants is the following well-known monotonicity property, which is typically used to prove Theorem \ref{T:Vit 1}.   

\begin{thm}[\cite{viterbo1992symplectic}]\label{T:Vit mon}
If $H, G\in C_{c}^\infty(\R/\Z\times\R^{2n})$ are two compactly supported Hamiltonians, generating diffeomorphisms $\varphi_H$ and $\varphi_G$ as the time-$1$ maps, 
then 
\begin{equation}\label{E:Vit mon}
					H\geq G \ \implies\ c_+(H)\geq c_+(G).  
\end{equation}
\end{thm}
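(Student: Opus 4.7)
The plan is a standard monotone continuation argument, which can be carried out either in the Floer-theoretic framework or, equivalently in this compactly supported setting, in Viterbo's original generating function construction. The starting point is the min-max presentation
\[
c_+(H) \;=\; \inf\Bigl\{\, \max_{x\in\alpha}\Ac_H(x)\ \Big|\ \alpha\in CF_*(H),\ [\alpha]=\mu_H\,\Bigr\},
\]
where $\alpha$ ranges over Floer chains (or, in Viterbo's picture, singular chains in sublevel sets of a quadratic-at-infinity generating function) representing the distinguished ``fundamental class'' $\mu_H$ used to define $c_+$.

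Since $H\geq G$, one can pick a smooth interpolation $K_s(t,z)$ with $K_s\equiv H$ for $s\leq -1$, $K_s\equiv G$ for $s\geq 1$, and $\partial_s K_s\leq 0$ pointwise: for instance $K_s:=H+\beta(s)(G-H)$ for a non-decreasing cutoff $\beta:\R\to[0,1]$. The associated continuation map $\Phi:CF_*(H)\to CF_*(G)$ is a chain map and, because continuation respects the PSS-type identification of the fundamental class, sends $\mu_H$ to $\mu_G$ at the level of homology.

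The crucial action estimate comes from the Floer energy identity. For any continuation solution $u(s,t)$ asymptotic to $x_-\in\mathcal{P}(H)$ at $s=-\infty$ and $x_+\in\mathcal{P}(G)$ at $s=+\infty$ one has, with the paper's sign convention $\Ac_H=\int x^*\lambda_0+\int H\,dt$,
\[
\Ac_G(x_+)-\Ac_H(x_-)\;=\;-\int\!\!\int|\partial_s u|^2\,ds\,dt\ +\ \int\!\!\int(\partial_s K_s)(t,u(s,t))\,ds\,dt\;\leq\;0,
\]
since both terms on the right are non-positive. Hence every generator appearing in $\Phi(\alpha)$ with nonzero coefficient has $G$-action bounded above by the maximal $H$-action of a generator of $\alpha$. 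Given $\varepsilon>0$, choose $\alpha$ representing $\mu_H$ with $\max_{x\in\alpha}\Ac_H(x)\leq c_+(H)+\varepsilon$; then $\Phi(\alpha)$ represents $\mu_G$ with every generator action at most $c_+(H)+\varepsilon$, so $c_+(G)\leq c_+(H)+\varepsilon$. Letting $\varepsilon\to 0$ gives the desired inequality.

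The main obstacle is not conceptual but bookkeeping: one must use a version of the construction adapted to compactly supported Hamiltonians on $\R^{2n}$, either Viterbo's generating functions quadratic at infinity (with the min-max taken over the Borel--Moore class dual to a point) or Floer homology set up with a tame convex structure at infinity. In either framework, verifying that the continuation map preserves the distinguished class $\mu$ defining $c_+$, and that the sign in the energy identity above is the one claimed under the paper's action convention, are the only non-routine checks; both are classical. The rest of the argument is then genuinely formal.
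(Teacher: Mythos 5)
The paper offers no proof of Theorem \ref{T:Vit mon} at all: it is quoted as a known result of Viterbo \cite{viterbo1992symplectic} and used as a black box (only its consequence, Lemma \ref{L:mon}, is proved in the paper), so there is no internal argument to compare yours against line by line. Your proposal is a correct outline of the standard proof and I see no genuine gap in it beyond the bookkeeping you yourself flag. Two remarks on how it sits relative to the cited source. First, Viterbo's original argument lives in the generating-functions-quadratic-at-infinity framework rather than Floer theory; there the same monotone interpolation $K_s=H+\beta(s)(G-H)$ produces inclusions of sublevel sets of the generating functions and in fact yields the sharper two-sided estimate $\int_0^1\min_z\bigl(H-G\bigr)\,dt\ \leq\ c_+(H)-c_+(G)\ \leq\ \int_0^1\max_z\bigl(H-G\bigr)\,dt$, of which both the monotonicity \eqref{E:Vit mon} and the Hofer--Lipschitz property (item (7) of Theorem \ref{T:Vit 1}) are immediate corollaries; your continuation-map route proves the same inequality in the Floer picture. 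Second, the two ``non-routine checks'' you name are indeed the real ones: on $\R^{2n}$ a compactly supported $H$ has a continuum of constant, degenerate $1$-periodic orbits of action $0$ outside its support, so the Floer-theoretic version of $CF_*(H)$ and of the class $\mu_H$ requires either the GFQI substitute or a nondegenerate perturbation followed by a limit (using the Lipschitz property), and the sign conventions of this paper ($\Ac_H(x)=\int x^*\lambda_0+\int_0^1 H\,dt$, $i_{X_H}\omega=dH$) do make $c_+$ increasing in $H$, so your energy identity and the direction of the final inequality ($c_+(G)\leq c_+(H)+\varepsilon$ for all $\varepsilon>0$) are consistent with the statement. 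In short: correct as a sketch, essentially the standard argument from the literature, packaged via monotone continuation rather than via the continuity estimate from which Viterbo deduces it.
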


\section{Extending $c_{-}(\varphi)$ and $c_+(\varphi)$ to non-compactly supported isotopies}\label{S:ext}
In this section we explain how to construct the spectral invariants for a non-compactly supported Hamiltonian on the disc and show that many of the usual properties 
for compactly supported Hamiltonians extend, see Proposition \ref{P:1} and Lemma \ref{L:mon}.    
Throughout this section $H\in C^\infty\big(\R/\Z\times\D\big)$ satisfies
\begin{equation}\label{E:bc2}
								H_t(z)=0\qquad \forall t\in\R/\Z,\ z\in\partial\D.
\end{equation}
Note that each symplectic isotopy $\varphitilde=\{\varphi_t\}_{t\in[0,1]}$ on the disc, starting at the identity, is generated by a unique such $H$.   
 
\begin{defn}\label{D:as}
We define an \emph{approximating sequence} for $H$ to be any sequence $H_n\in C_c^\infty\big(\R/\Z\times\R^2\big)$, for $n\in\N$, satisfying the following two properties: 
\begin{enumerate}
	\item (Extension) $H_n$ restricted to $[0,1]\times\D$ is equal to $H$.    
	\item (Convergence) $\lim_{n\to\infty} \|H_n\|_{C^0(\R^2\backslash\D)}=0$.   
\end{enumerate}
\end{defn}

It is straightforward to see that an approximating sequence always exists.   In Lemma \ref{L:ext} and Remark \ref{R:ext} below we give constructions with additional properties.   
Now set: 

\begin{defn}\label{D:extended spectral invariants} 
For any approximating sequence $(H_n)_{n\in\N}$ for $H$, define  
\[
	c_+(H):=\lim_{n\to\infty}c_+(H_n),\qquad  c_-(H):=\lim_{n\to\infty}c_-(H_n)
\]
where $c_+(H_n)$ and $c_-(H_n)$ are the standard spectral invariants for compactly supported Hamiltonians 
on $\R^{2}$, as referred to in Theorem \ref{T:Vit 1}.    
\end{defn}

This definition agrees with the original one if $H$ has compact support in $\D\backslash\partial\D$ (take $H_n=H$ to be the constant sequence), and is justified by: 

\begin{lem}
If $(H_n)_{n\in\N}$ is an approximating sequence for $H$ then $c_+(H_n)$ and $c_-(H_n)$ are both Cauchy sequences in $\R$.  
\end{lem}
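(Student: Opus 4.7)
The plan is to reduce the claim directly to the Hofer-Lipschitz continuity of the classical spectral invariants recorded in item (7) of Theorem \ref{T:Vit 1}. Because each $H_n$ is compactly supported on $\R^2$, the difference $H_n - H_m$ lies in $C_c^\infty(\R/\Z \times \R^2)$, so by \eqref{E:Lipschitz} it suffices to show that $\vvvert H_n - H_m\vvvert$ tends to $0$ as $n,m \to \infty$.

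The key observation is that the two defining properties of an approximating sequence constrain $H_n - H_m$ very tightly. By the Extension property, $H_n|_{[0,1]\times\D} = H = H_m|_{[0,1]\times\D}$, so $(H_n - H_m)(t,z) = 0$ for every $(t,z) \in [0,1]\times\D$. Consequently, for each $t$ one has $\max_{\R^2}(H_n - H_m)(t,\cdot) \geq 0$ and $\min_{\R^2}(H_n - H_m)(t,\cdot) \leq 0$, and both extrema can only be attained outside $\D$. Therefore
\begin{equation*}
\max_{z \in \R^2}(H_n - H_m)(t,z) - \min_{z \in \R^2}(H_n - H_m)(t,z) \leq 2\,\|H_n(t,\cdot) - H_m(t,\cdot)\|_{C^0(\R^2\setminus\D)}.
\end{equation*}
Integrating in $t$ and applying the triangle inequality yields the estimate
\begin{equation*}
\vvvert H_n - H_m\vvvert \leq 2\bigl(\|H_n\|_{C^0([0,1]\times(\R^2\setminus\D))} + \|H_m\|_{C^0([0,1]\times(\R^2\setminus\D))}\bigr).
\end{equation*}

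By the Convergence property of an approximating sequence, the right-hand side tends to $0$ as $n,m \to \infty$. Combined with \eqref{E:Lipschitz} this gives
\begin{equation*}
|c_+(H_n) - c_+(H_m)| \leq \vvvert H_n - H_m\vvvert \xrightarrow{n,m\to\infty} 0,
\end{equation*}
so $(c_+(H_n))$ is Cauchy in $\R$; the same argument applies verbatim to $c_-$. Since no genuine difficulty arises beyond bookkeeping—the whole proof is just Lipschitz continuity plus the trivial estimate above—there is no real obstacle; the only point worth being careful about is that the Convergence hypothesis controls $H_n$ only on $\R^2\setminus\D$, which is exactly what the vanishing of $H_n - H_m$ on $\D$ allows us to exploit.
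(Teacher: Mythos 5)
Your proof is correct and follows essentially the same route as the paper: both reduce the claim to the Hofer--Lipschitz property (item (7) of Theorem \ref{T:Vit 1}) and then use the fact that $H_n-H_m$ vanishes on $\D$ to bound $\vvvert H_n-H_m\vvvert$ by the $C^0$-norms on $\R^2\setminus\D$, which tend to zero by the Convergence property. The paper states the estimate slightly more sharply as $\vvvert H_j-H_k\vvvert_{\R^2}=\vvvert H_j-H_k\vvvert_{\R^2\backslash\D}\leq 2\|H_j-H_k\|_{C^0(\R^2\backslash\D)}$, but this is the same bookkeeping you carry out.
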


Indeed, it follows that the limit $\lim_{n\to\infty}c_+(H_n)$ (respectively $\lim_{n\to\infty}c_-(H_n)$) is independent of the approximating sequence: If 
$H'_n$ and $H''_n$ are two approximating sequences for $H$ then so is the mixed sequence that has $H'_n$ for $n$ even and $H''_n$ for $n$ odd.   
The proof is trivial: 

\begin{proof} 
For each $j,k\in\N$, 
\begin{align*} 
				|c_+(H_j)-c_+(H_k)|\ &\leq\ \vvvert H_j - H_k\vvvert_{\R^2}
								=\ \vvvert H_j - H_k\vvvert_{\R^2\backslash\D}
								\leq\ 2\| H_j - H_k\|_{C^{0}(\R^2\backslash\D)}\to 0, 
\end{align*}
where the first inequality is item (7) in Theorem \ref{T:Vit 1} (since clearly $c_+(H)\leq\gamma(H)$).   The same reasoning applies to $c_-$.  
\end{proof}

\begin{prop}\label{P:1}
Suppose $\varphitilde=\{\varphi_t\}_{t\in[0,1]}$ is a smooth path of area preserving diffeomorphisms of the disc starting at the identity.    Then the 
associated spectral invariants $c_+(\varphitilde)$ and $c_-(\varphitilde)$ defined in Definition \ref{D:extended spectral invariants} satisfy: 
\begin{enumerate}
 \item $c_+(\varphitilde^{-1})=-c_-(\varphitilde)$. 
 \item $c_-(\varphitilde)\leq 0\leq c_+(\varphitilde)$. 
 \item $c_+(\varphitilde\psitilde)\leq c_+(\varphitilde)+c_+(\psitilde)$. 
 \item $\gamma(\varphitilde)\geq\mathrm{area}(U)$ for every displaceable open disc $U$; meaning $\varphi_1(U)\cap U=\emptyset$. 
 \item $\gamma(\varphitilde)\leq\pi$. 
 \item $\gamma(\varphitilde)\leq \vvvert \varphitilde\vvvert_{\D}$ (notation from Definition \ref{D:Hofer norm}). 
 \item $H\mapsto c_+(\varphitilde_H)$ and hence $H\mapsto c_-(\varphitilde_H)$ and $H\mapsto \gamma(\varphitilde_H)$ are $1$-Lipschitz with respect to the 
Hofer norm $\vvvert \cdot \vvvert_{\D}$, where the latter is in the sense of Definition \ref{D:Hofer norm} for Hamiltonians which may not be supported in the interior of the disc.  
\end{enumerate}
\end{prop}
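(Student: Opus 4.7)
The plan is uniform across the seven items: for each property, choose an approximating sequence $(H_n)$ for $H$ adapted to the property in question, apply the classical counterpart in Theorem \ref{T:Vit 1} to each $H_n$, and pass to the limit via Definition \ref{D:extended spectral invariants}. Writing $\psi^n_t$ for the time-$t$ flow of $H_n$ on $\R^2$, the common key observation is that $H_n|_{\D}=H$ together with $H|_{\partial\D}\equiv 0$ forces $\partial\D$ to be invariant under every $\psi^n_t$; hence $\D$ and $\R^2\setminus\D$ are both invariant, and $\psi^n_t|_{\D}=\varphi_t$.

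Items (2) and (3) pass to the limit directly. For (3), given an approximating sequence $(K_n)$ for the generator of $\psitilde$, the natural concatenated Hamiltonian $H_n\#K_n$ generating the concatenation of the two approximating isotopies restricts to the generator $H\#K$ of $\varphitilde\psitilde$ on $\D$, and on $\R^2\setminus\D$ its $C^0$-norm is controlled by $\|H_n\|_{C^0(\R^2\setminus\D)}+\|K_n\|_{C^0(\R^2\setminus\D)}$ via the invariance observation. Thus $H_n\#K_n$ is an approximating sequence for $H\#K$, and the classical subadditivity gives (3) in the limit. For (6) and (7), which I treat together, I would first verify
\[
\vvvert H_n-K_n\vvvert_{\R^2}\ \longrightarrow\ \vvvert H-K\vvvert_{\D}
\]
for any approximating sequences $(H_n),(K_n)$ of $H,K$: since $(H-K)|_{\partial\D}=0$ one has $\max_{\D}(H-K)\ge 0\ge\min_{\D}(H-K)$, so once $\|H_n-K_n\|_{C^0(\R^2\setminus\D)}$ is sufficiently small the global extrema of $H_n-K_n$ are attained inside $\D$. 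The classical Lipschitz bound \eqref{E:Lipschitz} then passes to the limit, yielding (7); (6) is the special case $K\equiv 0$.

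For (1), the unique Hamiltonian vanishing on $\partial\D$ and generating $\varphitilde^{-1}$ is $\bar H(t,z):=-H(t,\varphi_t(z))$. Setting $\bar H_n(t,z):=-H_n(t,\psi^n_t(z))$ yields a compactly supported Hamiltonian generating $(\psi^n)^{-1}$; by invariance of $\R^2\setminus\D$ under $\psi^n_t$, we have $|\bar H_n(t,z)|\le\|H_n\|_{C^0(\R^2\setminus\D)}$ for $z\notin\D$, so $\bar H_n$ is an approximating sequence for $\bar H$, and the classical identity $c_+(\bar H_n)=-c_-(H_n)$ passes to the limit. Item (4) is analogous: for a displaceable open disc $U\subset\D$, the equality $\psi^n_1|_{\D}=\varphi$ yields $\psi^n_1(U)\cap U=\emptyset$ for every $n$, and the classical lower bound $\gamma(H_n)\ge\mathrm{area}(U)$ passes to the limit.

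Finally, (5) requires a deliberate choice of approximating sequence: using Lemma \ref{L:ext} below, take $H_n$ with support inside the disc of radius $1+1/n$, whose area is $\pi(1+1/n)^2$; the classical estimate $\gamma(H_n)\le\pi(1+1/n)^2$ then gives (5) in the limit. The only---modest---technical obstacle throughout is to confirm that the natural operations (inverse, composition, Hofer differences) preserve the notion of approximating sequence; in each case this reduces to the invariance of $\R^2\setminus\D$ under the approximating flows, which is precisely the role played by the boundary condition $H|_{\partial\D}\equiv 0$.
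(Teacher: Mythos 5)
Your proposal is correct and follows essentially the same route as the paper: items (1)--(4), (6), (7) are obtained by passing the corresponding properties of Theorem \ref{T:Vit 1} to the limit in Definition \ref{D:extended spectral invariants} (your verification that inverses, compositions and Hofer differences of approximating sequences remain approximating sequences, via invariance of $\partial\D$, is exactly the content the paper calls trivial), and item (5) is handled precisely as in the paper, via the shrinking-support sequence of Lemma \ref{L:ext} and the bound $\gamma(H_n)\leq\pi(1+1/n)^2$. One small caution: to get (6) with the right constant, pass the classical item (6) (or the $1$-Lipschitz property of $\gamma$ itself from classical item (7)) to the limit using your convergence $\vvvert H_n\vvvert_{\R^2}\to\vvvert H\vvvert_{\D}$, rather than applying the $c_+$ and $c_-$ bounds \eqref{E:Lipschitz} separately with $K\equiv 0$, which would only give $\gamma(\varphitilde)\leq 2\vvvert H\vvvert_{\D}$.
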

\begin{proof}
The proofs of all properties except for Property 5 follow trivially from Theorem \ref{T:Vit 1} and Definition \ref{D:extended spectral invariants}.
Property 5 also follows easily from Property 5 of Theorem \ref{T:Vit 1} if we choose a sequence of approximating Hamiltonians whose support shrinks to that of the disc in the limit.   The existence of such an approximating sequence with shrinking supports is also easy, but we postpone this to the next Lemma, since we will 
there construct an approximating sequence with additional properties.    
\end{proof} 

For the proofs of Theorems \ref{T:spec main-1 Intro} and \ref{T:spec main-2 Intro} we will use an approximating sequence with additional properties: 

\begin{lem}\label{L:ext}
Suppose $H\in C^\infty\big(\R/\Z\times\D\big)$ vanishes identically on $\R/\Z\times\partial\D$.   Then there exists an approximating sequence $H_n$ for $H$ which satisfies 
\begin{enumerate}
	\item $\supp H_n\subset \D_{1+1/n}$, 
	\item $\sup_{n\in\N}\|\nabla H_n\|_{C^0(\R^2\backslash\D)}<\infty$. 
\end{enumerate}
\end{lem}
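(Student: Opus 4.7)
The plan is to set $H_n := \chi_n \cdot \overline H$, where $\overline H$ is a fixed smooth extension of $H$ to a neighborhood of $\D$ in $\R^2$, and $\chi_n$ is a sequence of smooth cutoffs equal to $1$ on $\D$, supported in $\D_{1+1/n}$, and with gradient of order $n$. Property (1) of the lemma and the Extension axiom of Definition \ref{D:as} will hold immediately by construction; the work is in establishing convergence in $C^0$ off $\D$ together with the uniform $C^1$-bound.

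First I would produce a single smooth extension $\overline H \in C^\infty([0,1] \times \R^2)$ that agrees with $H$ on $[0,1] \times \D$. Since $H$ is smooth up to the closed boundary, a standard Seeley-type reflection in a collar neighborhood of $\partial \D$ (followed by a cutoff to confine it to, say, $\D_2$) suffices. Because $H \equiv 0$ on $\R/\Z \times \partial\D$, the same holds for $\overline H$, and hence by smoothness of $\overline H$ together with compactness of $\partial\D$ there is a constant $C > 0$ with
\[
|\overline H(t,z)|\ \leq\ C\cdot \dist(z,\partial\D)
\]
for all $(t,z)\in [0,1]\times (\D_2\backslash\D)$.

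Next I would fix a smooth function $\eta: \R \to [0,1]$ with $\eta \equiv 1$ on $(-\infty, 0]$ and $\eta \equiv 0$ on $[1, \infty)$, and define $\chi_n(z) := \eta(n(|z|-1))$. Then $\chi_n$ is smooth on $\R^2$ (it equals $1$ in a neighborhood of the origin, so there is no issue at the origin), equals $1$ on $\D$, is supported in $\D_{1+1/n}$, and satisfies $\|\nabla \chi_n\|_{C^0} \leq n\|\eta'\|_\infty$. Setting $H_n := \chi_n \overline H$ produces smooth compactly supported Hamiltonians; property (1) and the Extension axiom hold by construction, and since $|H_n|\leq|\overline H|$ on $\R^2\backslash\D$ with $\supp H_n \subset \D_{1+1/n}$, the Convergence axiom $\|H_n\|_{C^0(\R^2\backslash\D)}\to 0$ follows from the continuity of $\overline H$ and its vanishing on $\partial\D$.

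The only delicate point, and the real content of the lemma, is property (2). Writing $\nabla H_n = (\nabla \chi_n)\,\overline H + \chi_n\,\nabla\overline H$ on $\R^2\backslash\D$, the second term is bounded uniformly by $\|\nabla \overline H\|_{C^0([0,1]\times(\D_2\backslash\D))}$. The first term appears dangerous because $|\nabla \chi_n|$ is of order $n$; however on the support of $H_n$ outside $\D$ one has $\dist(z,\partial\D)\leq 1/n$, so the linear vanishing estimate above gives $|\overline H(t,z)|\leq C/n$ there, and the two factors cancel to give a bound of $C\|\eta'\|_\infty$ independent of $n$. This precise balance between the blow-up of $|\nabla \chi_n|$ and the first-order vanishing of $\overline H$ at $\partial\D$ is what makes the lemma work; everything else is routine.
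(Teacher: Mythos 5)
Your proof is correct, but it takes a genuinely different route from the paper. You build $H_n=\chi_n\overline H$ from a fixed smooth extension $\overline H$ and a cutoff $\chi_n(z)=\eta(n(|z|-1))$, and you rescue the $C^1$-bound from the $O(n)$ blow-up of $\nabla\chi_n$ via the linear-vanishing estimate $|\overline H(t,z)|\leq C\,\dist(z,\partial\D)$, which holds because $\overline H$ vanishes on $\R/\Z\times\partial\D$ and is $C^1$ on the compact collar; this cancellation, which you correctly identify as the heart of the matter, is sound, and the extension, support, and $C^0$-convergence checks are routine as you say. The paper instead never multiplies by a cutoff: it precomposes the extension with a radial squashing map, setting $H_n(t,1+r,\theta):=H(t,1+\rho_n(r),\theta)$ with $\rho_n(r)=\rho(nr)/n$, so the gradient bound comes directly from the chain rule ($|\partial_r H_n|\leq\|\nabla H\|\,\|\rho'\|_{C^0}$, the $\theta$-derivative being untouched) and no vanishing-rate estimate for $H$ at the boundary is needed. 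Both arguments prove the lemma; yours is the more standard device and generalizes painlessly to other domains and dimensions, while the paper's composed form is not an accident of taste: it is refined in Remark \ref{R:ext} (the condition $\rho_n'\geq-1/n$) and exploited in Lemma \ref{L:wind}, where the angular component of $X_{H_n^t}$ is computed as $-\rho_n'(r)\,\partial_rH^t(1+\rho_n(r),\theta)$, something a cutoff-times-extension sequence would not directly provide. So your construction suffices for the statement as written, but it would not serve as a drop-in replacement for the later winding-number argument.
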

Note that if the supports converge to $\D$ then this is the best we can expect; in general the second order derivatives of the Hamiltonians $H_n$ will blow up.   
\begin{proof} 
Fix a smooth function $\rho:\R\to\R$ satisfying $\rho(r)=r$ on $r\leq0$, $\rho(r)\geq0$ on $r\geq0$, and $\rho(r)=0$ on $r\geq1$.    Then for each $n\in\N$ the 
function $\rho_n:\R\to\R$ given by 
\[
				\rho_n(r):=\frac{\rho(nr)}{n}
\]
satisfies: $\rho_n(r)=r$ on $r\leq0$, $\rho_n(r)=0$ on $r\geq1/n$, while on $r\geq0$ we have $\rho_n\geq0$, $\|{\rho_n}'\|_{C^0}\leq\|{\rho}'\|_{C^0}$ and $\|{\rho_n}\|_{C^0}\leq\|{\rho}\|_{C^0}/n$.    
Fix any smooth extension of $H$ to the whole plane, and denote this also by $H$.    In polar coordinates this is a map 
$H:\R/\Z\times[0,\infty)\times\R\to\R, (t,r,\theta)\mapsto H(t,r,\theta)$.   Now set 
\begin{equation}\label{D:H_n}
						H_n(t,1+r,\theta):=H(t,1+\rho_n(r),\theta)\qquad \forall n\in\N.  
\end{equation}
Then $H_n(t,\cdot)$ coincides with $H(t,\cdot)$ on the unit disc in $\R^2$ because $\rho_n(r)=r$ on $r\leq0$, and $H_n$  is everywhere smooth.   Moreover, 
$H_n$ has compact support, indeed, $H_n(t,\cdot)$ vanishes outside of $\D_{1+1/n}$ because $\rho_n(r)$ vanishes on $r\geq 1/n$ and 
$H(t,\cdot)$ vanishes $\partial\D$.    The $\theta$-derivative is 
independent of $n$ so it suffices to bound the $r$-derivative:  From the chain rule 
$\|\partial_rH_n\|_{C^0}\leq \|\nabla H\|_{C^0}\|{\rho_n}'\|_{C^0}\leq \|\nabla H\|\|{\rho}'\|_{C^0}$ 
is uniformly bounded in $n$.   Finally $\|H_n(t,\cdot)\|_{C^0(\R^2\backslash\dot{\D})}\to 0$ for $n\to\infty$, uniformly in $t$, because 
\begin{align*}
		\sup\big\{ |H_n(t,1+r,\theta)|\ \big|\ 0\leq r \big\}\ &=\ \sup\big\{ |H(t,1+\rho_n(r),\theta)|\ \big|\ 0\leq r\big\} \\
										&=\ \sup\big\{ |H(t,1+r,\theta)|\ \big|\ 0\leq r\leq \|{\rho_n}\|_{C^0}\big\}
\end{align*}
which decays to zero as $n\to\infty$, because $\|{\rho_n}\|_{C^0}\leq\|{\rho}\|_{C^0}/n\to 0$ and $H$ is continuous and vanishes on $[0,1]\times\partial\D$.  
\end{proof}

\begin{rem}\label{R:ext}
In the proof of Lemma \ref{L:wind} it will be convenient to use a more refined sequence of approximations $H_n$.   

 In the refined version $H_n$ still has the form of \eqref{D:H_n} and satisfies the conditions of Lemma \ref{L:ext}, but now each $\rho_n$ satisfies 
\begin{equation}\label{E:extra condition rho}
											\rho'_n(r)\geq -1/n
\end{equation}
in addition to the previous properties: $\rho_n(r)=r$ on $r\leq0$, $\rho_n(r)=0$ on $r\geq1/n$, while on $r\geq0$ we have $\rho_n\geq0$, $\|{\rho_n}'\|_{C^0}\leq 1$ and $\|{\rho_n}\|_{C^0}\leq 1/n$.   
 
One way to achieve this with \eqref{E:extra condition rho} is to set $\rho_n(r):=\tilde{\rho}_n(nr)/n$, where $\tilde{\rho}_n:\R\to\R$ is a smooth function satisfying 
the following properties: $\tilde{\rho}_n(r)=r$ on $r\leq 0$, $\tilde{\rho}_n(r)\geq0$ on $r\geq0$ and $\tilde{\rho}_n(r)=0$ on $r\geq1$, and $1\geq\tilde{\rho}_n'(r)\geq-1/n$ for all $r$, which can easily 
be achieved.  
\end{rem}

We end this section by showing that the monotonicity property of Theorem \ref{T:Vit mon} extends to non-compactly supported Hamiltonians:  

\begin{lem}\label{L:mon}
Suppose $H, G\in C_{c}^\infty(\R/\Z\times\D)$ are two Hamiltonians on the disc, both satisfying the boundary condition \eqref{E:bc2}.   Then the extended spectral invariants satisfy 
\begin{equation}\label{E:mon}
					H\geq G \ \implies\ c_+(H)\geq c_+(G).  
\end{equation}
\end{lem}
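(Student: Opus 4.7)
My plan is to prove the extended monotonicity by approximating $H$ and $G$ by compactly supported smooth Hamiltonians on $\R^{2}$, applying the classical monotonicity Theorem \ref{T:Vit mon} to the approximants, and passing to the limit via Definition \ref{D:extended spectral invariants}. The immediate obstacle is that even though the approximating sequences $H_n$ and $G_n$ coincide with $H$ and $G$ on $\D$ (and hence satisfy $H_n \geq G_n$ there), on $\R^{2}\setminus\D$ both are merely $C^{0}$-small perturbations of $0$ with no sign control on the difference, so Theorem \ref{T:Vit mon} does not apply directly. Worse, one cannot simply engineer compatible smooth extensions with $H_n \geq G_n$ pointwise: the outward normal derivative of $H-G$ at $\partial\D$ is generically non-zero (and strictly negative from inside), so every smooth extension of $H-G$ is necessarily negative on some neighborhood of $\partial\D$ outside $\D$.

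To bypass this, I will use a small bump-function correction. Fix approximating sequences $(H_n)_{n\in\N}$ and $(G_n)_{n\in\N}$ for $H$ and $G$ provided by Lemma \ref{L:ext}, so that $\supp H_n,\ \supp G_n \subset \D_{1+1/n} \subset \D_{2}$, and set
\[
			\delta_n\ :=\ \|H_n - G_n\|_{C^{0}(\R^{2}\setminus\D)},
\]
which tends to $0$ as $n\to\infty$ by condition (2) of Definition \ref{D:as}. Fix a smooth cut-off $\eta:\R^{2}\to[0,1]$ with $\eta\equiv 1$ on $\D_{2}$ and compact support, and set $\tilde H_n := H_n + \delta_n\eta$, which is again smooth and compactly supported. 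Then $\tilde H_n \geq G_n$ pointwise on $\R^{2}$: on $\D$ the difference equals $(H-G) + \delta_n \geq 0$; on $\D_{2}\setminus\D$ it equals $(H_n - G_n) + \delta_n \geq 0$ by the choice of $\delta_n$; and on $\R^{2}\setminus\D_{2}$ both $H_n$ and $G_n$ vanish, so the difference reduces to $\delta_n\eta \geq 0$.

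With pointwise domination in hand, the classical monotonicity Theorem \ref{T:Vit mon} gives $c_+(\tilde H_n) \geq c_+(G_n)$. The Hofer-Lipschitz property (item (7) of Theorem \ref{T:Vit 1}) applied to $\tilde H_n$ and $H_n$ yields $|c_+(\tilde H_n) - c_+(H_n)| \leq \vvvert \delta_n\eta\vvvert = \delta_n$, since $\delta_n\eta$ is time-independent and takes values in $[0,\delta_n]$. Combining these two inequalities gives $c_+(G_n) \leq c_+(H_n) + \delta_n$, and passing to the limit using Definition \ref{D:extended spectral invariants} produces $c_+(G) \leq c_+(H)$, as desired. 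The main difficulty throughout is the one already flagged: one cannot arrange $H_n \geq G_n$ by clever choice of extensions, but this is neatly circumvented by the constant-sized bump $\delta_n\eta$, whose Hofer norm is precisely $\delta_n$ and therefore contributes nothing in the limit.
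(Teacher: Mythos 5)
Your proposal is correct and follows essentially the same route as the paper: restore pointwise domination by adding a small non-negative bump supported on a disc containing all the supports (your $\delta_n\eta$ versus the paper's $2\epsilon\chi$ with $\epsilon$ fixed and $n$ large), apply Theorem \ref{T:Vit mon} to the corrected pair, control the correction via the Hofer--Lipschitz property, and pass to the limit using Definition \ref{D:extended spectral invariants}. The only difference is cosmetic bookkeeping ($n$-dependent $\delta_n\to0$ instead of a fixed $\epsilon$ with $n\geq n_0$), and your estimate $\vvvert\delta_n\eta\vvvert=\delta_n$ is exactly the control the paper obtains for its bump.
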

In general the condition $H\geq G$ on $\D$ cannot hold for smooth extensions to the plane.   So there is something small to check.  
\begin{proof} 
Assume $H\geq G$ and let $H_n$ and $G_n$ be approximating sequences with supports in $\D_{1+1/n}$ as in Lemma  \ref{L:ext}.   
Let $\chi:\R^2\to[0,1]$ be a smooth function with compact support with $\chi\equiv1$ on the disc $\D_2$ of radius $2$.  For $\epsilon>0$ sufficiently small $2\epsilon\chi$ is 
sufficiently $C^2$-small that $c_+(2\epsilon\chi)\leq\max2\epsilon\chi =2\epsilon$.    

For $n_0\in\N$ sufficiently large we have $\|G_n|_{\R^2\backslash\D}\|_{\infty}\leq\epsilon$ and $\|H_n|_{\R^2\backslash\D}\|_{\infty}\leq\epsilon$ for all $n\geq n_0$, and therefore that $G_n$ and $H_n$ are smooth extensions of $G$ and $H$ respectively to the plane, having compact support and satisfying $G_n\leq H_n+2\epsilon\chi$ on $\R^2$.   So by Theorem \ref{T:Vit mon} $c_+(G_n)\leq c_+(H_n+2\epsilon\chi)$.  
Therefore, for all $n\geq n_0$, 
\[
		c_+(G_n)\leq c_+(H_n+2\epsilon\chi)\leq c_+(H_n)+c_+(2\epsilon\chi)\leq c_+(H_n)+2\epsilon
\]
where the second inequality is from the Lipschitz continuity of $c_+$ with respect to the Hofer norm, \eqref{E:Lipschitz}.   Letting $n\to\infty$ we 
conclude $c_+(G)\leq c_+(H)+2\epsilon$ for each $\epsilon>0$ as required.    

\end{proof} 

\section{Proof of Theorems \ref{T:spec main-1 Intro} and \ref{T:spec main-2 Intro}}\label{S:ext disc}

Now we can prove Theorem \ref{T:spec main-1 Intro} using an approximating sequence of Hamiltonians that have the properties provided by Lemma \ref{L:ext}.   Throughout this section we 
assume for convenience that the area of the disc is $\pi$, which leads to Theorem \ref{T:spec main-1 Intro} with $A=\pi$.   
We begin with:  

\begin{lem}\label{L:degree length}
For any $C^1$-differentiable loop $x:[0,1]\to\D_{1+\delta}\backslash\dot{\D}$, $\delta>0$, there holds 
\begin{equation}\label{E:degree length}
					\left| \int_{D}u^*\omega - \pi\deg(x)\right|\ \leq\ L(x)\delta
\end{equation}
where $u$ is any capping disc for $x$, and $\deg(x)\in\Z$ is the winding number of $x$ about the disc, i.e.\ about any point in $\D$, and $L(x)$ is the Euclidean length of $x$.  
\end{lem}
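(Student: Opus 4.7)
\medskip

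\noindent\textbf{Proof proposal.} The plan is to evaluate $\int_{D}u^*\omega$ via Stokes' theorem with a convenient primitive of $\omega$, compare the result against $\pi\deg(x)$ term-by-term, and then bound the discrepancy on the annulus $1\le r\le 1+\delta$ using the obvious estimates.

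Since the area is normalized to $\pi$ we have $\omega=dx\wedge dy$. The primitive
\[
\lambda_{0}=\tfrac{1}{2}(x\,dy-y\,dx)
\]
is globally smooth on $\R^{2}$, and a direct computation in polar coordinates gives $\lambda_{0}=\tfrac{r^{2}}{2}\,d\theta$ on $\R^{2}\setminus\{0\}$. By Stokes,
\[
\int_{D}u^{*}\omega=\int_{x}\lambda_{0}=\int_{x}\tfrac{r^{2}}{2}\,d\theta.
\]
Since $x$ avoids $\dot{\D}$, its winding number about any point of $\dot{\D}$ (including the origin) equals $\deg(x)$, so $2\pi\deg(x)=\int_{x}d\theta$. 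Subtracting,
\[
\int_{D}u^{*}\omega\ -\ \pi\deg(x)\ =\ \int_{x}\tfrac{r^{2}-1}{2}\,d\theta.
\]

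Now I would bound the integrand. Writing $x(t)=(r(t)\cos\theta(t),r(t)\sin\theta(t))$, we have $|\dot x|=\sqrt{\dot r^{2}+r^{2}\dot\theta^{2}}\ge r|\dot\theta|$, hence $L(x)\ge\int_{0}^{1}r|\dot\theta|\,dt$. On the annulus one has $r-1\le\delta$ and $\frac{r+1}{2r}\le 1$ for $r\ge 1$, so
\[
\tfrac{r^{2}-1}{2}\,|\dot\theta|\ =\ (r-1)\cdot\tfrac{r+1}{2r}\cdot r|\dot\theta|\ \le\ \delta\cdot r|\dot\theta|.
\]
Integrating gives
\[
\left|\int_{x}\tfrac{r^{2}-1}{2}\,d\theta\right|\ \le\ \delta\int_{0}^{1}r|\dot\theta|\,dt\ \le\ \delta\,L(x),
\]
which is the required inequality.

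There is no real obstacle here: the only mild point is identifying the right primitive so that the quantity $\pi\deg(x)$ appears naturally on the boundary (which $\lambda_{0}=\frac{r^{2}}{2}d\theta$ supplies), and observing the factorization $r^{2}-1=(r-1)(r+1)$ together with the elementary inequality $\frac{r+1}{2r}\le 1$ for $r\ge 1$ to get the clean constant $\delta$ rather than something like $\tfrac{1}{2}(2\delta+\delta^{2})$.
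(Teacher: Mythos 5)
Your proof is correct, and it takes a genuinely different route from the paper's. The paper reduces to \emph{simple} closed curves via additivity of all three terms under concatenation, and then argues by cases: simple loops contractible in the annulus are lifted to the cover $\R\times[0,\delta]$ and the enclosed area is estimated there, while non-contractible simple loops have $\deg=\pm1$ and the discrepancy is bounded by the area of the annulus. You instead compute the discrepancy in closed form: Stokes with the global, rotationally symmetric primitive $\lambda_0=\tfrac{r^2}{2}\,d\theta$ (which also justifies independence of the capping disc $u$), together with $2\pi\deg(x)=\int_x d\theta$ (winding about the origin equals $\deg(x)$ since the winding number is constant on the connected set $\dot{\D}$ in the complement of the image), gives $\int_D u^*\omega-\pi\deg(x)=\int_x\tfrac{r^2-1}{2}\,d\theta$, and the pointwise bound $\tfrac{r^2-1}{2}|\dot\theta|=(r-1)\cdot\tfrac{r+1}{2r}\cdot r|\dot\theta|\le\delta\,|\dot x|$ (using $r-1\le\delta$, $\tfrac{r+1}{2r}\le1$, $r|\dot\theta|\le|\dot x|$, all valid since $1\le r\le1+\delta$) yields the clean constant $\delta$. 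What this buys: a uniform treatment of all winding numbers with no case analysis, and it sidesteps the paper's decomposition of a $C^1$ loop into finitely many simple closed loops, a step that is actually somewhat delicate (a $C^1$ loop can have infinitely many self-intersections). The only points worth making explicit are the existence of a $C^1$ angular lift $\theta(t)$, which holds because the loop avoids the origin, and the use of the angular $1$-form $d\theta=\tfrac{x\,dy-y\,dx}{x^2+y^2}$ on $\R^2\setminus\{0\}$; both are routine. The paper's argument, in exchange, is more pictorial and does not require writing the curve in polar coordinates.
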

\begin{proof}
Let $x:[0,1]\to\D_{1+\delta}\backslash\dot{\D}$ be a continuously differentiable closed curve.   
 Note that each of the three terms in \eqref{E:degree length} is additive with respect to concatenation of closed curves, and that $x$ can be written as a finite sum (more precisely a concatenation) of simple closed curves, because it is $C^1$.    Hence it suffices to prove \eqref{E:degree length} for {\it simple} closed curves.
 Therefore let us suppose that $x$ is simple, i.e.\ $x(t_1)=x(t_2)$ implies $t_1=t_2$ or $t_1,t_2\in\{0,1\}$.    
 If $x$ is contractible in $\D_{1+\delta}\backslash\dot{\D}$, then we can lift it to a simple closed curve $\tilde{x}$ in the covering $\R\times[0,\delta]$ 
 via the projection $\pi(u,v)=(1+v)e^{2\pi iu}$.   
 If the image of $\tilde{x}$ lies in the rectangle $[a,b]\times[0,\delta]$, then it encloses an area at most $2\pi(b-a)\delta$ with respect to $\pi^*\omega$.   
 Clearly $2\pi(b-a)$ is bounded by half of the (euclidean) length of $\tilde{x}$ in $\R\times[0,\delta]$, which is bounded by $L(x)/2$.  
So the area enclosed by $x$ equals the $\pi^*\omega$-area enclosed by $\tilde{x}$, is bounded by $L(x)\delta/2$, i.e., 
  $\left| \int_{D}u^*\omega - \pi\deg(x)\right|\leq L(x)\delta/2$ as the degree term vanishes.    
 If however the simple closed curve $x$ is non-contractible in $\D_{1+\delta}\backslash\dot{\D}$ then $\deg(x)\in\{\pm1\}$ and we see that $\left| \int_{D}u^*\omega - \pi\deg(x)\right|\leq (1+\delta)^2\pi - \pi \leq (2\pi)\delta\leq L(x)\delta$.   This completes the proof of \eqref{E:degree length} when $x$ is simple, and therefore also the general case.   
\end{proof}

\begin{proof}(Of Theorem \ref{T:spec main-1 Intro})
It suffices to prove the statements for $c_+$, as the statement for $c_-$ follows then from the first property of Proposition \ref{P:1}.   For $c_+$ it suffices to show that 
\begin{equation}\label{spec}
		c_+(\varphitilde)\in\spec(\varphitilde)\cup\{0\}\cup\{\pi\}
\end{equation}
and the usual arguments will then imply that $c_+(\varphitilde)$ is invariant under 
homotopies of $\varphitilde$ through isotopies with fixed endpoints:   Indeed, $\spec(\varphitilde)\cup\{0\}\cup\{\pi\}$ is a closed nowhere dense set (closed is 
straightforward to see, and after extending a generating 
Hamiltonian for $\varphitilde$ to all of $\R^2$ with compact support it is a classical result that the spectrum is nowhere dense, see for example \cite{schwarz2000action}, and the extension only 
increases the set of spectral values) and we already showed that $c_+(\varphitilde)$ depends continuously on $\varphitilde$ in the $C^0$-topology.

We now show \eqref{spec}.    Fix a generating Hamiltonian $H\in C^\infty\big(\R/\Z\times\D\big)$ with $H_t(z)=0$ for all $t\in[0,1]$,  $z\in\partial\D$, 
for the given path $\varphitilde=\{\varphi_t\}_{t\in[0,1]}$.    Let $H_n\in C^\infty\big(\R/\Z\times\R^2\big)$ for $n\in\N$ be an approximating sequence for $H$ as in 
Lemma \ref{L:ext}.    In particular $\supp H_n\subset \D_{1+1/n}$ and the gradients are uniformly bounded.   
Since $\lim_{n\to\infty}c_+(H_n)=c_+(H)$ and $0\leq c_+(H_n)\leq\area(\supp H_n)\to\pi$ it follows that $c_+(H)\in[0,\pi]$.     
Therefore let us assume that $0<c_+(H)<\pi$ and show that $c_+(H)\in\spec(\varphitilde)$.   
For sufficiently large $n$ we must have 
\begin{equation}\label{E:bds Hn}
								\varepsilon<c_+(H_n)<\pi-\varepsilon
\end{equation}
for some sufficiently small $\varepsilon>0$.    By definition we find for each $n\in\N$ a $1$-periodic orbit $x_n\in C^{\infty}(\R/\Z,\R^2)$ for $H_n$ with action 
$\Ac_{H}(x_n)=c_+(H_n)$.   By \eqref{E:bds Hn} this is non-zero, so $x_n$ lies in the support of $H_n$, namely in $\D_{1+1/n}$.    Now if $x_n$ is contained in $\D$ for some subsequence, then we are done, since $H_n$ agrees with $H$ on $\D$.    So without loss of generality let us assume 
$x_n\subset A_n:=\D_{1+1/n}\backslash\D$ for all $ n\in\N$.  
Using \eqref{D:action functional}   
\begin{align}\label{E:c_+}
			c_+(H_n) &= \int_0^1 H_n(t,x_n(t))\,dt \ +\ \int_{D}u_n^*\omega
\end{align}
where $u_n:D\to\R^2$ is any capping disc for $x_n$.   As $n\to\infty$ the first term on the right decays to zero, because $H_n$ converges uniformly to zero outside of the 
interior $\dot{\D}$.  
It remains to understand the second term on the right of \eqref{E:c_+}.   
Applying Lemma \ref{L:degree length} we have 
\begin{equation*}
					\left| \int_{D}u_n^*\omega - \pi\deg(x_n)\right|\ \leq\ L(x_n)\frac{1}{n}
\end{equation*}
for each $n\in\N$.   As $\|\dot{x}_n\|=\|X_{H_n}(x_n)\|$ are uniformly bounded, so is $L(x_n)$ uniformly bounded.   
Thus letting $n\to\infty$ in the last line yields $\lim_{n\to\infty}d(\int_{D}u_n^*\omega,\pi\Z)=0$, where $d(\cdot,\cdot)$ denotes the distance, and so from \eqref{E:c_+} we have $\lim_{n\to\infty}d(c_+(H_n),\pi\Z)=0$, contradicting  \eqref{E:bds Hn}.   This completes the proof.
\end{proof}

Now we prove Proposition \ref{P:rot}: 

\begin{proof}(Of Proposition \ref{P:rot})
Suppose $\rho>0$.   By Theorem \ref{T:spec main-1 Intro} $c_+(\varphitilde)$ depends only on the homotopy class of $\varphitilde$ with fixed ends.   So we may assume, 
after such a homotopy, that the Hamiltonian vector field is autonomous on the boundary of the disc - this is stronger than necessary, it suffices that $X_{H^t}$ is nowhere vanishing on $\partial\D$.   
It follows that at each point $p\in\partial\D$, we have 
$-\partial_rH^t(p)=d\theta[X_{H^t}]>0$ on $\partial\D$,  and therefore there exists $\epsilon>0$ so that 
\begin{equation}\label{E:H>0 near b}
										H^t(z)>0\qquad \mbox{for all}\quad 1-2\epsilon\leq |z|^2<1.
\end{equation}
We now construct a Hamiltonian function $G$ on the disc, so that $G\leq H$ and so that $c_+(\varphi_{G})>0$.   Then the result will follow from 
Lemma \ref{L:mon}.   

We construct $G$ to be time independent, compactly supported in $\dot{\D}$, and radial.    
Indeed, for each parameter $\lambda\leq 0$ consider $G_{\lambda}(z)=g_{\lambda}(|z|^2)$ for some smooth function $g_{\lambda}:[0,\infty)\to\R$ whose 
graph has the following features: $g_{0}\geq 0$ is compactly supported in the interior of the region $1-\epsilon\leq r\leq 1$ and has a single positive bump, meaning it has 
a unique maximum and no other critical points in its support.  We can arrange that $g_0$ is sufficiently small that $G_0<H$ on the annulus $1-2\epsilon\leq|z|^2<1$, which 
is possible because of \eqref{E:H>0 near b}.    For $\lambda\leq0$ we choose $g_{\lambda}$ to satisfy the following: $g_{\lambda}=\lambda$ on $0\leq r\leq 1-2\epsilon$, 
$g_{\lambda}$ is a monotonic increasing function on $1-2\epsilon\leq r\leq 1-\epsilon$ with $g_{\lambda}=0$ near to $r=1-\epsilon$, and $g_{\lambda}=g_0$ on $r\geq 1-\epsilon$.  
For example, fix $g_0$ and $g_{-1}$ satifying the above properties, then set $g_{\lambda}=|\lambda|g_{-1}$ on $r\leq 1-\varepsilon$ and $g_{\lambda}=g_0$ on $r\geq 1-\epsilon$, then 
the dependence on $\lambda$ is also smooth.   
\begin{figure}[htbp]
\includegraphics[scale=0.8]{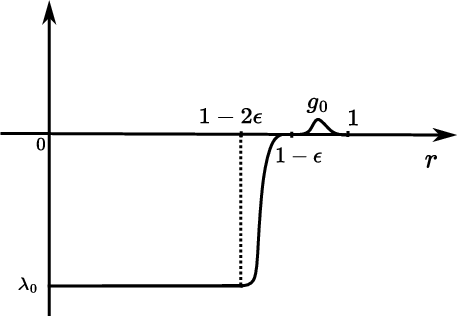}
\centering
\caption{Here the graph of $g_{\lambda}$ at $\lambda=\lambda_{0}$}
\label{fig:uni}
\end{figure}
Clearly the maximum value $M>0$ is independent of $\lambda\leq0$.     For $\lambda=0$ we have $G_0\geq0$, and it is somewhere strictly positive so $c_+(G_0)>0$.   
Therefore $c_+(G_0)$ is a positive spectral value, i.e.\ lies in $\spec(G_0)\cap(0,\infty)$.   More generally for $\lambda\leq0$, we know that $G_{\lambda}$ lies in $\spec(G_{\lambda})\cap[0,\infty)$.  
Since $G_\lambda$ is radial it follows from a computation that the values in $\spec(G_{\lambda})\cap[0,\infty)$ are precisely the points on the non-negative $y$-axis where each tangent line to the graph 
of $g_{\lambda}$ having integer slope $k\geq0$, meets the $y$-axis, see \cite{humiliere2016towards}. From this it is easy to see that $\spec(G_{\lambda})\cap[0,\infty)$ 
lies in the set $\{0\}\cup [M,\infty)$.   Therefore $\lambda\mapsto c_+(G_\lambda)$ is a continuous map into $\{0\}\cup [M,\infty)$ which takes the value $M>0$ at $\lambda=0$.   
Hence $c_+(G_\lambda)\geq M>0$ for all $\lambda\leq 0$.    Finally we set $G=G_{\lambda}$ for some $\lambda<\min\{ H^t(z)\, |\, t\in\R/\Z\}$.     Then $G\leq H$.   

If $\rho<0$ then the result follows from the case above together with item 1 of Proposition \ref{P:1}.  
\end{proof}

To prove Theorem \ref{T:spec main-2 Intro} we will use: 
\begin{lem}\label{L:wind}

Suppose that the rotation number on the boundary satisfies $|\rho(\varphitilde|_{\partial\D})|<1$, 
and that the generating Hamiltonian $H$ has autonomous Hamiltonian vector field on the boundary of the disc.   Let $H_n$ be the approximating sequence of Lemma \ref{L:ext} with the extra property described in Remark \ref{R:ext}.   Then if $n$ is sufficiently large, every time-$1$ orbit segment $x_n:[0,1]\to\R^2$ for $H_n$ that lies in $\R^2\backslash\dot{\D}$ has winding about the origin satisfying $|\wind(x_n)|<1$. 
\end{lem}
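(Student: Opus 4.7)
The plan is to compute the Hamiltonian vector field of $H_n$ in polar coordinates and reduce the bound $|\wind(x_n)|<1$ --- i.e.\ $|\theta_n(1)-\theta_n(0)|<2\pi$ for a continuous lift of the angular coordinate along $x_n=(R_n,\theta_n)$ --- to a one-dimensional comparison argument. In polar coordinates $\omega=R\,dR\wedge d\theta$, so $\dot R=\partial_\theta H_n/R$ and $\dot\theta=-\partial_R H_n/R$. Substituting $H_n(t,R,\theta)=H(t,1+\rho_n(R-1),\theta)$ and using that $H$ vanishes on $\R/\Z\times\partial\D$ (which forces $\partial_\theta H(t,1,\theta)=0$, while the autonomicity hypothesis makes $\partial_R H(t,1,\theta)=-\alpha(\theta)$ independent of $t$), a first-order Taylor expansion in $s=\rho_n(R_n(t)-1)\in[0,1/n]$ yields the uniform estimates
\[
    \dot R_n(t) = O(1/n), \qquad \dot\theta_n(t) = \frac{\rho_n'(R_n(t)-1)}{R_n(t)}\,\alpha(\theta_n(t)) + O(1/n)
\]
for every orbit of $H_n$ contained in $A_n:=\D_{1+1/n}\setminus\D$, with constants depending only on $H$.

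Next I would treat the main case $\rho>0$, for which $\alpha>0$ on all of $\partial\D$. The refined property $\rho_n'\in[-1/n,1]$ of Remark \ref{R:ext}, together with $R_n\geq 1$, gives the two-sided estimate $-c/n\leq\dot\theta_n(t)\leq\alpha(\theta_n(t))+c/n$ for some constant $c$ depending only on $H$. The lower bound yields $\theta_n(1)-\theta_n(0)\geq -c/n>-2\pi$ at once for $n$ large. For the upper bound I would monotonize by setting $\phi_n(t):=\theta_n(t)+2ct/n$, which makes $\dot\phi_n>0$ strictly and, by Lipschitz continuity of $\alpha$, still satisfies $\dot\phi_n\leq\alpha(\phi_n)+O(1/n)$. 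Assuming for contradiction that $\theta_n(1)-\theta_n(0)\geq 2\pi$, so that $\phi_n(1)-\phi_n(0)\geq 2\pi$, the change of variable $t\mapsto\phi_n(t)$ and $2\pi$-periodicity of $\alpha$ give
\[
    1 \;=\; \int_0^1 dt \;\geq\; \int_{\phi_n(0)}^{\phi_n(0)+2\pi}\!\frac{d\phi}{\alpha(\phi)+O(1/n)} \;\xrightarrow[n\to\infty]{}\; \int_0^{2\pi}\frac{d\phi}{\alpha(\phi)} \;=\; \frac{1}{\rho} \;>\;1,
\]
a contradiction for all $n$ large. The symmetric case $\rho<0$ follows by the same argument applied to the inverse isotopy $\varphitilde^{-1}$, which has rotation number $-\rho>0$ on the boundary.

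The main obstacle, and the reason the refined property $\rho_n'\geq -1/n$ of Remark \ref{R:ext} is invoked in place of the weaker $|\rho_n'|\leq 1$ from Lemma \ref{L:ext}, is exactly that the lower bound $\dot\theta_n\geq-c/n$ must be available a priori in order for the monotonization to produce a strictly positive $\dot\phi_n$ --- otherwise the change of variables cannot be set up. For the remaining case $\rho=0$, where $\alpha$ either vanishes identically (making $\dot\theta_n=O(1/n)$ trivially) or has at least one zero, the same scheme closes: if $\alpha$ has constant sign then $\int_0^{2\pi}d\phi/\alpha$ diverges at its zeros and the comparison $1\geq T_n$ fails all the more for large $n$; if $\alpha$ changes sign, the planar linearization at each boundary zero of $\alpha$ is a saddle whose stable manifold is transverse to $\partial\D$, and the estimate $\dot R_n=O(1/n)$ forces orbits in $A_n$ to remain near one maximal arc between consecutive zeros of $\alpha$ for the whole time interval, on which the one-sided comparison argument again applies.
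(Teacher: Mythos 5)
Your treatment of the cases where $\alpha:=-\partial_rH|_{\partial\D}$ (the angular speed of the boundary flow) has no zeros --- equivalently $\rho\neq 0$, by autonomy --- and of $\rho=0$ with $\alpha$ of constant sign, is correct and genuinely different from the paper's argument: where the paper squeezes $\theta_n(t)$ between solutions of $\delta$-perturbed boundary equations and invokes Gronwall, you monotonize and compare with the period integral $\int_0^{2\pi}d\phi/\alpha=1/\rho>1$ (respectively $=\infty$), which is a clean quantitative route; and your two-sided bound $-c/n\leq\dot\theta_n\leq\alpha(\theta_n)+c/n$, derived from $\rho_n'\in[-1/n,1]$, is exactly the role Remark \ref{R:ext} plays in the paper. (A minor point: for $\rho<0$ argue by time reversal, $H\mapsto -H(1-t,\cdot)$, which preserves the cut-off form \eqref{D:H_n} and flips the winding, rather than via the inverse isotopy, whose generating Hamiltonian is not of that form; or simply run the symmetric estimates with $\alpha<0$.)

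The genuine gap is the subcase $\rho=0$ with $\alpha$ changing sign. The assertion that "the planar linearization at each boundary zero of $\alpha$ is a saddle whose stable manifold is transverse to $\partial\D$" is unjustified and false in general: zeros of $\alpha$ may be degenerate (nothing in the hypotheses prevents $H$ from vanishing to high order at boundary points), so no hyperbolic structure is available; moreover the relevant dynamics is that of $H_n$, whose radial cut-off by $\rho_n$ destroys any such local picture outside $\D$. The confinement claim that orbits "remain near one maximal arc between consecutive zeros of $\alpha$" is asserted rather than proven, and it does not follow from $\dot R_n=O(1/n)$, which gives no angular information. What actually closes this case is elementary and already contained in your estimates: wherever $\alpha\leq0$ one has $\dot\theta_n\leq c/n$, and wherever $\alpha\geq0$ one has $\dot\theta_n\geq-c/n$ (both use $\rho_n'\geq-1/n$, $|\rho_n'|\leq1$ and the $O(1/n)$ Taylor error). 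Hence a lift with $\theta_n(1)-\theta_n(0)\geq2\pi$ would have to traverse an entire component of $\{\alpha<0\}$, of some fixed length $\ell>0$, in the positive direction while its positive angular speed there is at most $c/n$, impossible once $c/n<\ell$; symmetrically for $\theta_n(1)-\theta_n(0)\leq-2\pi$ using a component of $\{\alpha>0\}$. This blocking argument is precisely Case 2 of the paper's proof (the estimate \eqref{E:speed in Omegas} and the intervals $I^{\pm}$), and it is exactly where the refined property of Remark \ref{R:ext} is needed; you should replace the saddle/stable-manifold paragraph by it.
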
 
In the above statement $\wind(x_n):=(\theta(1)-\theta(0))/2\pi$, if $x_n(t)=(r(t),e^{i\theta(t)})$. 

\begin{proof}
By assumption each $X_{H}$-trajectory on the boundary of the disc $t\mapsto(1,\theta):[0,1]\to\partial\D$ satisfies $|\theta(1)-\theta(0)|/2\pi<1-\epsilon$ for some 
uniform $\epsilon>0$.    We consider two cases separately.  \\

{\bf Case 1:} Here we consider the case that $\partial_rH(1,\theta)\geq 0$ for all $\theta\in\R$.    The case $\partial_rH(1,\theta)\leq 0$ can be handled similarly.  

Note that the vector field $X_{H^t}$ at a point $(1,\theta)\in\partial\D$ has angular component 
$d\theta\big[X_{H^t}(1,\theta)\big]=-\partial_rH^t(1,\theta)$.  
Applying Gronwall's Lemma to the vector fields $\partial_rH|_{\partial\D}$ and $-\partial_rH|_{\partial\D}$ there exists $\delta>0$ so that if $V$ is any smooth vector field on $\partial\D$ with $\|V-\partial_rH|_{\partial\D}\|<2\delta$ or $\|V+\partial_rH|_{\partial\D}\|<2\delta$, then 
 each $V$-trajectory $[0,1]\to\partial\D$, $t\mapsto(1,\psi(t))$ satisfies $|\psi(1)-\psi(0)|/2\pi<1-\epsilon/2<1$.    
 
Recall, as in Lemma \ref{L:ext}, that by slight abuse of notation, $H$ also refers to a fixed smooth extension (of $H$) to all of $\R^2$.    
Choose $n_0\in\N$ sufficiently large that 
\begin{equation}\label{E:n0 condition}
|\partial_rH^t(1+\rho_n(r),\theta)-\partial_rH(1,\theta)|<\delta
\end{equation}
for all $r\geq0$, $\theta\in\R$, $t\in[0,1]$, $n\geq n_0$.   The vector field $X_{H^t_n}$ has angular component 
$d\theta\big[X_{H^t_n}(1+r,\theta)\big]=-\partial_rH^t_n(1+r,\theta)=-\rho'_n(r)\partial_rH^t(1+\rho_n(r),\theta)$, and so  
\begin{equation}\label{E:bound on angular component}
	\left|d\theta\big[X_{H^t_n}(1+r,\theta)\big]\right|\leq  \left|\partial_rH(1,\theta)\right|+\delta
\end{equation}
by \eqref{E:n0 condition}, when $n\geq n_0$.   
Consider an arbitrary $X_{H^t_n}$-trajectory $x(t)=(r(t),\theta(t))$ in $\R^2\backslash\dot{\D}$.    Then by \eqref{E:bound on angular component} and 
our assumption that $\partial_rH(1,\theta)\geq0$ everywhere, 
\[
					-\partial_rH(1,\theta(t))-\delta\leq \dot{\theta}(t)\leq \partial_rH(1,\theta(t))+\delta
\] 
for all $t$.   Hence $t\mapsto\theta(t)$ is squeezed: $\psi_-(t)\leq \theta(t)\leq\psi_+(t)$, 
 where $\dot{\psi}_+(t)=\partial_rH(1,\psi_+(t))+\delta$ with initial condition $\psi_+(0)=\theta(0)$, and $\dot{\psi}_-(t)=-\partial_rH(1,\psi_-(t))-\delta$ with 
 $\psi_-(0)=\theta(0)$.    Thus $\wind(x)\leq (\psi_+(1)-\psi_+(0))/2\pi<1$, where the last inequality is from Gronwall, since the vector field 
$V(1,\theta)=\partial_rH(1,\theta)+\delta$ is within $\delta$ of $\partial_rH(1,\theta)$.   
  Similarly  $\wind(x)\geq (\psi_-(1)-\psi_-(0))/2\pi>-1$ where the last inequality is from Gronwall, since $V(1,\theta)=-\partial_rH(1,\theta)-\delta$ is within 
  $\delta$ of $-\partial_rH(1,\theta)$.     This proves the Lemma in Case 1.  \\
 
 {\bf Case 2:} Here we consider the case that $\partial_rH(1,\theta)$ changes sign: Then the sets 
 \[
 	\Omega^+=\{ (1,\theta)\in\partial\D | \partial_rH(1,\theta)>0 \},\qquad\Omega^-=\{ (1,\theta)\in\partial\D | \partial_rH(1,\theta)<0 \}
\]
are both non-empty and open.  We claim that for any $\delta>0$ there exists $n_0\in\N$ sufficiently large so that 
if $x(t)=(r(t),\theta(r))$ is an $X_{H^t_n}$-trajectory in $\R^2\backslash \dot{\D}$, and $n\geq n_0$, then 
\begin{equation}\label{E:speed in Omegas}
		\qquad \theta(t)\in\Omega^-\implies \dot{\theta}(t)\geq-\delta,\qquad\theta(t)\in\Omega^+\implies \dot{\theta}(t)\leq\delta.
\end{equation}
Assuming this claim for the moment, let's see how to complete the proof of the Lemma in Case 2:  
Fix two intervals of positive length $I^+\subset\Omega^+$, $I^-\subset\Omega^-$, and choose $\delta>0$ so that $\delta<(1/2)\min\{|I^+|, |I^-|\}$.    If $n$ is sufficiently large then by 
the first statement in \eqref{E:speed in Omegas} $t\mapsto\theta(t)$ can never get more than half way across $I^-$ in a negative direction, so $\wind(x_n)\geq -1+(1/2)|I^-|$.   
Similarly, from the second statement in \eqref{E:speed in Omegas}, $t\mapsto\theta(t)$ can never get more than half way 
across $I^+$ in a positive direction, so $\wind(x_n)\leq 1-(1/2)|I^+|$.   Thus $|\wind(x_n)|<1$.   

It remains to prove the claim \eqref{E:speed in Omegas}.   Let us show $\theta(t)\in\Omega^-\implies \dot{\theta}(t)\geq-\delta$, as the proof of the second inequality in 
\eqref{E:speed in Omegas} is analogous.  
Since $\partial_rH^t|_{\partial\D}=\partial_rH|_{\partial\D}<0$ on $\Omega^-$, there exists $\epsilon_0>0$ so that 
$-M\leq \partial_rH^t(1+r,\theta)\leq\delta$ for all $0\leq r\leq\epsilon_0$ and all $\theta\in\Omega^-$, where $M:=2\|\partial_rH|_{\partial\D}\|_{\infty}$.   Choose $n_0\in\N$ sufficiently large that $\max\rho_{n_0}<\epsilon_0$ and $\min\rho'_{n_0}>-\delta/M$ (this latter condition is possible because we are using 
$\rho_n$ as in Remark \ref{R:ext}).    Now, suppose $\theta(t)\in\Omega^-$ and $n\geq n_0$.   Then 
\begin{equation}\label{E:case 2 speed theta}
		\dot{\theta}(t)=d\theta\big[X_{H^t_n}(1+r(t),\theta(t))\big]=\rho_n'(r(t))\big[-\partial_rH^t(1+\rho_n(r(t)),\theta(t))\big].  
\end{equation}
If $0\leq\rho'_n(r(t))\leq1$ then the right hand side is bounded from below by $-\partial_rH^t(1+\rho_n(r(t)),\theta(t))$, which in turn is bounded from 
below by $-\delta$ because $0\leq\rho_n(r(t))\leq\max\rho_{n_0}<\epsilon_0$.    The alternative is that $0>\rho'_n(r(t))\geq\min\rho'_{n}>-\delta/M$. 
In this case the right hand side of \eqref{E:case 2 speed theta} is bounded from below by $(-\delta/M)M=-\delta$, as required. 
\end{proof}

\begin{proof}(Of Theorem \ref{T:spec main-2 Intro})

We will prove statements (i) and (ii) which concern $c_+(\varphitilde)$, as $c_-$ then follows from the first item in Proposition \ref{P:1}.     

By Theorem \ref{T:spec main-1 Intro} $c_+(\varphitilde)$ depends only on the homotopy class of $\varphitilde$ with fixed ends.   So we may assume, after such a homotopy, that the Hamiltonian vector field is autonomous on the boundary of the disc.   We do this to apply Lemma \ref{L:wind}.   

First we show (i) when $\rho=1$.   In this case the boundary of the disc has a fixed point and it will have action $A$, so $A$ lies in the spectrum.   So then (i) follows from 
Theorem \ref{T:spec main-1 Intro}.   

Now we show (i) when $|\rho|<1$.   
Fix an approximating sequence of Hamiltonians $H_n$ for $H$, that satisfy the conditions of Lemma \ref{L:ext} as well as those in Remark \ref{R:ext}.   
We have $c_+(H_n)=\Ac_{H_n}(x_n)\to c_+(\varphitilde)$ for $n\to\infty$, 
for some $1$-periodic orbits $x_n$ of $H_n$.    If each $x_n$ lies in the disc $\D$, then $c_+(H_n)=\Ac_{H_n}(x_n)=\Ac_{H}(x_n)$ is a spectral value of $H$, so we are done, since $\spec(H)$ is closed\footnote{Or we 
note that the sequence $H_n$ lies in a continuous family of Hamiltonians with $n\in[1,\infty)$ (since $\rho_n$ makes sense for non-integer values of $n$), and so by continuity of the spectral invariants and discreteness of the spectrum of $H|_{\D}$ we conclude that the sequence $c_+(H_n)$ is constant in $n$ since each $x_n$ lies in $\D$.}.    
The alternative, after restricting to a subsequence,  is that each $x_n$ is in $\R^2\backslash\D$.    
 
Since $|\rho|<1$ we have $|\wind(x_n)|<1$ by Lemma \ref{L:wind}, i.e.\ $\deg(x_n)=0$, for each $n$.    It follows from Lemma \ref{L:degree length} that $|\int_{D}u_n^*\omega|\to 0$ (the length of each $x_n$ is uniformly bounded because 
 $\nabla H_n$ is uniformly bounded).   Clearly $\int_0^1H_n^t(x_n(t))dt\to 0$ also, because $H_n\to 0$ uniformly on $\R^2\backslash\dot{\D}$.   
 We conclude that $c_+(\tilde{\varphi})=\lim_{n\to\infty}\Ac_{H_n}(x_n)= 0$ and again we are done.    This completes the proof of statement (i).  
 
 To show statement (ii):  If $\rho=0$ then due to a fixed point on the boundary we know that zero lies 
 in the spectrum, so the statement follows from (i).   If instead $\rho>0$, then by Proposition \ref{P:rot} 
 again the statement follows from (i).   
\end{proof}

\bibliographystyle{plain}
\bibliography{reference}

\end{document}